\documentclass[12pt, a4paper]{amsart}
\usepackage{amssymb}
\usepackage{tikz} 
\usepackage{mathabx}
\usepackage{graphicx}
\usepackage{mathrsfs}
\usepackage{hyperref}
\usepackage{import}
\usepackage{tcolorbox}

\addtolength{\textwidth}{3 truecm}
\addtolength{\textheight}{1 truecm}
\setlength{\voffset}{-.6 truecm}
\setlength{\hoffset}{-1.3 truecm}
     
\theoremstyle{plain}

\newtheorem{theorem}{Theorem}[section]
\newtheorem{lemma}[theorem]{Lemma}
\newtheorem{proposition}[theorem]{Proposition}

\theoremstyle{definition}

\newtheorem{define}{Definition}[section]
\newtheorem{problem}{Problem}

\newtheorem{example}{Example}[section]

\theoremstyle{remark}
\newtheorem{remark}{Remark}[section]

\begin{document}

\date\today

\title[\large P\MakeLowercase{roperties of squeezing functions}]{ \large P\MakeLowercase{roperties of squeezing functions on $h$-extendible domains}}
\author{Ninh Van Thu}

\address{Ninh Van Thu}
\address{Faculty of Mathematics and Informatics, Hanoi University of Science and Technology, No. 1 Dai Co Viet, Bach Mai, Hanoi, Vietnam}
\email{thu.ninhvan@hust.edu.vn}

\subjclass[2020]{Primary 32T25; Secondary 32M05, 32H02, 32F45.}
\keywords{Squeezing function, h-extendible domains, scaling method, pseudoconvex domains, tangential convergence}

\begin{abstract}
The purpose of this article is twofold. We first prove that the localized squeezing function approaches 1 near strongly pseudoconvex boundary points of bounded domains in $\mathbb{C}^{n+1}$. Then, we show that the localized squeezing function approaches 1 along certain sequences converging to pseudoconvex boundary points of finite type in the sense of \cite{D'A82}, including uniformly $\Lambda$-tangential and spherically $\frac{1}{2m}$-tangential convergence patterns.
\end{abstract}

\maketitle

\section{Introduction}
Let $\Omega$ be a  bounded domain in $\mathbb{C}^{n+1}$ and we denote by $\mathrm{Aut}(\Omega)$ the set of all automorphisms of $\Omega$. For a point $p \in \Omega$ and a holomorphic embedding $f\colon \Omega \to \mathbb{B}^{n+1}= B(0,1)$ with $f(p)=(0',0)$, one sets
$$
\sigma_{\Omega}(p, f):=\sup\left \{r>0\colon B(0,r)\subset f(\Omega)\right\}.
$$
Here and in what follows, $B(z,r)$ denotes the ball centered at $z\in \mathbb{C}^{n+1}$ with radius $r>0$. Then the \textit{squeezing function} $\sigma_{\Omega}\colon \Omega\to\mathbb{R}$ is defined as
$$
\sigma_{\Omega}(p):=\sup_{f} \left\{\sigma_{\Omega}(p, f)\right\}.
$$
(See Definition $1.1$ in \cite{DGZ16}.) Notice that the squeezing function is invariant under biholomorphisms and $0 < \sigma_{\Omega}(z)\leq 1$ for any $z \in \Omega$. Furthermore, $\Omega$ is biholomorphically equivalent to the unit ball $\mathbb{B}^{n+1}$ if $\sigma_{\Omega}(z)=1$ for some $z \in \Omega$.
 
We would like to emphasize that the squeezing function $\sigma_{\Omega}$ has a fundamentally global character: its value at a point $p \in \Omega$ depends on holomorphic embeddings of the entire domain $\Omega$ into the ball, not merely on the local geometry near $p$. Therefore, for a boundary point $\xi_0 \in \partial\Omega$, it is more natural to study the boundary behavior of \emph{the localized squeezing function}, that is, $\sigma_{\Omega \cap U}$ for a suitable neighborhood $U$ of $\xi_0$, rather than that of $\sigma_{\Omega}$ itself.  For the localization of the squeezing function, we refer the reader to \cite{FN22,NV20,RY22}.

In this paper, we investigate the phenomenon of the localized squeezing function approaching $1$ along sequences converging to pseudoconvex boundary points by the scaling technique, introduced by S. Pinchuk (cf. \cite{Pi91}). For a smoothly bounded planar domain $D$ and $p \in \partial D$, one always has $\lim\limits_{z \to p} \sigma_D(z) = 1$ (see \cite{DGZ12}). For pseudoconvex domains of higher dimension, this result holds when $D$ is a bounded strongly pseudoconvex domain in $\mathbb{C}^{n+1}$ (cf. \cite{DGZ16}), or when $p$ is globally strongly convex in the sense of \cite{DGZ16}, or when $p$ is a spherically extreme boundary point in the sense of \cite{KZ16}.
 
The first aim of this paper is to prove the following theorem, which shows that the localized squeezing function approaches $1$ along any sequence converging to a strongly pseudoconvex boundary point.
\begin{theorem}\label{maintheorem1}
Let $\Omega$ be a bounded domain in $\mathbb{C}^{n+1}$ and $\xi_0\in \partial \Omega$. If $\partial \Omega$ is $\mathcal{C}^2$ strongly pseudoconvex at $\xi_0$, then we have $\lim\limits_{z\to \xi_0}\sigma_{\Omega\cap U_0}(z)=1$, where $U_0$ is a sufficiently small  neighborhood of $\xi_0$.
\end{theorem}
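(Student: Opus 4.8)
The plan is to prove the equivalent statement that $\sigma_\Omega(z_j)\to 1$ for every sequence $z_j\to\xi_0$ in $\Omega$, using Pinchuk's non-isotropic scaling at the strongly pseudoconvex point together with the biholomorphic invariance of the squeezing function and the fact that $\sigma_{\mathbb{B}^{n+1}}\equiv 1$. First I would normalize coordinates: since $\partial\Omega$ is $\mathcal C^2$ and strongly pseudoconvex at $\xi_0$, I can choose local holomorphic coordinates $w=(w',w_{n+1})$ centered at $\xi_0$ in which a local defining function reads
\[
\rho(w)=2\Re w_{n+1}+|w'|^2+o(|w|^2),
\]
the Hermitian term being positive definite precisely by strong pseudoconvexity. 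For each $j$ let $\xi_j\in\partial\Omega$ be a nearest boundary point to $z_j$ and $\epsilon_j=\dist(z_j,\partial\Omega)\to 0$. I then form the scaling $T_j=\Delta_j\circ A_j$, where $A_j$ is an affine map (translating $\xi_j$ to the origin and rotating so that the real inner normal at $\xi_j$ aligns with the $-\Re w_{n+1}$-axis) and $\Delta_j$ is the parabolic dilation $\Delta_j(w',w_{n+1})=(\epsilon_j^{-1/2}w',\,\epsilon_j^{-1}w_{n+1})$. Setting $\Omega_j:=T_j(\Omega)$ and $q_j:=T_j(z_j)$, invariance under the biholomorphism $T_j$ gives $\sigma_\Omega(z_j)=\sigma_{\Omega_j}(q_j)$, so the whole problem is transferred to the scaled family.

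Second I would identify the limit. Substituting $T_j^{-1}(W)=(\epsilon_j^{1/2}W',\,\epsilon_j W_{n+1})$ into $\rho$, the rescaled defining functions $\rho_j:=\epsilon_j^{-1}\rho\circ T_j^{-1}$ satisfy, on every fixed ball $\{|W|\le M\}$,
\[
\rho_j(W)=2\Re W_{n+1}+|W'|^2+\tau_j(W),\qquad \sup_{|W|\le M}|\tau_j(W)|\to 0,
\]
because the $o(|w|^2)$ remainder acquires, after scaling, a factor $\epsilon_j^{-1}|w|^2\le C$ that is multiplied by a quantity tending to $0$ with $|w|\le\sqrt{C\epsilon_j}$. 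Hence $\Omega_j$ converges in the local Hausdorff sense to the unbounded realization $E:=\{W:2\Re W_{n+1}+|W'|^2<0\}$, with $q_j\to q_\infty=(0',-1)\in E$, and on each compactum $\Omega_j$ is eventually sandwiched between $\{\rho_\infty<-\eta\}$ and $\{\rho_\infty<\eta\}$ for every $\eta>0$. Since the Cayley transform $\Psi$ is a biholomorphism of $E$ onto $\mathbb{B}^{n+1}$, invariance yields $\sigma_E\equiv 1$.

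Finally I would conclude by a stability (lower semicontinuity) step: because $\sigma\le 1$ always, it suffices to show $\liminf_j\sigma_{\Omega_j}(q_j)\ge 1$. For $r<1$ the set $\Psi^{-1}(\overline{B(0,r)})$ is a compact subset of $E$, hence contained in $\Omega_j$ once $j$ is large; composing $\Psi$ with the automorphism of $\mathbb{B}^{n+1}$ carrying $\Psi(q_j)$ to the origin (these tend to the identity as $q_j\to q_\infty$) produces embeddings $f_j\colon\Omega_j\to\mathbb{B}^{n+1}$ with $f_j(q_j)=0$ and $f_j(\Omega_j)\supset B(0,r_j)$ for some $r_j\to 1$, whence $\sigma_{\Omega_j}(q_j)\ge r_j\to 1$.

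I expect the main obstacle to be making this last transfer fully rigorous, because $E$ is unbounded and the scaled domains $\Omega_j$ need not lie inside $E$, so the Cayley map $\Psi$ — a priori defined only on $E$ — cannot be applied to all of $\Omega_j$ without controlling the part of $\Omega_j$ lying far from $q_\infty$. I would handle this by localizing the squeezing function at $\xi_0$, exploiting that a strongly pseudoconvex point is a local holomorphic peak point so that $\sigma_\Omega(z_j)$ and $\sigma_{\Omega\cap U}(z_j)$ share the same limit for a small neighborhood $U$, and then working with domains $E_j\supseteq T_j(\Omega\cap U)$ that are biholomorphic to $\mathbb{B}^{n+1}$ and converge to $E$, so that $\Psi_j$ restricted to $T_j(\Omega\cap U)$ is a genuine embedding into the ball. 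The accompanying technical point is that only $\mathcal C^2$ regularity is assumed, so the convergence $\rho_j\to\rho_\infty$ is merely uniform on compacta rather than in a higher $\mathcal C^k$ norm, and I would have to verify that this weaker convergence still forces the local Hausdorff convergence and the sandwiching used above.
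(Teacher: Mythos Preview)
Your approach via Pinchuk scaling coincides with the paper's, and you correctly isolate the obstacle: the Cayley map $\Psi$ is a priori defined only on the Siegel half-space $E$, while the scaled domains $\Omega_j$ need not sit inside $E$. Where you diverge from the paper is in how you propose to resolve this.

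The paper does not localize. It treats $\Psi(z,w)=\big(\tfrac{2z}{1-w},\tfrac{w+1}{1-w}\big)$ as a global rational automorphism of $\mathbb{C}^{n+1}$ minus a hyperplane, so that $f_j=\Psi\circ\Delta_j\circ\Phi_{\eta_j'}$ makes sense on all of $\Omega$. Because $\Omega$ is bounded, the parabolic dilation $\Delta_j$ pushes $\Phi_{\eta_j'}(\Omega\setminus U_0)$ off to infinity, and $\Psi$ collapses infinity to the single boundary point $(0',-1)\in\partial\mathbb{B}^{n+1}$; hence $f_j(\Omega\setminus U_0)\subset B((0',-1),\epsilon/2)$ for large $j$. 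Together with the normal convergence $f_j(\Omega\cap U_0)\to\mathbb{B}^{n+1}$ this gives $B(0,1-\epsilon)\subset f_j(\Omega)\subset B(0,1+\epsilon)$, and therefore $\sigma_\Omega(\eta_j)\ge(1-\epsilon)/(1+\epsilon)$ directly---no comparison between $\sigma_\Omega$ and $\sigma_{\Omega\cap U}$ is ever invoked.

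Your route through localization has a genuine gap: the assertion that $\sigma_\Omega(z_j)$ and $\sigma_{\Omega\cap U}(z_j)$ share the same limit because $\xi_0$ is a local holomorphic peak point is not a standard lemma and does not follow readily. The squeezing function demands embeddings of the \emph{full} domain $\Omega$ into $\mathbb{B}^{n+1}$, and a merely local peak function provides no mechanism for promoting an embedding of $\Omega\cap U$ to one of $\Omega$; your alternative of enveloping $T_j(\Omega\cap U)$ in ball-equivalent domains $E_j$ likewise produces only embeddings of $\Omega\cap U$. (A global peak function would be closer to what is needed, but strong pseudoconvexity at a single boundary point does not guarantee one.) Your worry about only $\mathcal{C}^2$ regularity, by contrast, is not an obstacle: uniform convergence of the rescaled defining functions on compacta already yields the normal convergence of domains used in the argument.
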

\begin{remark} 
We note that Theorem \ref{maintheorem1} is a local version of \cite[Theorem $1.3$]{DGZ16}. Moreover, if $\xi_0$ is a spherically extreme boundary point, then $\xi_0$ is strongly pseudoconvex (cf. \cite{NTN25}), and hence Theorem $3.1$ in \cite{KZ16} follows from Theorem \ref{maintheorem1}.  
\end{remark}

Next, we investigate the asymptotic behavior of the squeezing function near weakly pseudoconvex points. Namely, related to Problem $4.1$ in \cite{FW18} and Question $8.1$ in \cite{Bh23}, let us consider the following problem.
\begin{problem}\label{prob1}
If $\Omega$ is a bounded pseudoconvex domain with $\mathcal{C}^\infty$-smooth boundary, and if $\lim\limits_{j\to\infty}\sigma_{\Omega}(\eta_j)=1$ for some sequence $\{\eta_j\}\subset \Omega$ converging to $p\in \partial \Omega$, then is the boundary of $\Omega$ strongly pseudoconvex at $p$?
\end{problem}

It is well-known that the answer to this problem is affirmative, provided that $\partial\Omega$ is $h$-extendible with Catlin's finite multitype $(2m_1,\ldots, 2m_n, 1)$ at $p$ and $\{\eta_j\}$ converges $\Lambda$-non-tangentially to $p$ (cf. Definition $3.4$ in \cite{NN20}), where the multi-weight $\Lambda=\left(\frac{1}{2m_1},\ldots, \frac{1}{2m_{n}}\right)$ (cf. \cite{JK18, Ni18, MV19, NN20,NNC21}). 

We note that A. Zimmer \cite{Zi18} first proved that if $\lim_{z\to \partial D}\sigma_{D}(z)=1$ for a $\mathcal{C}^\infty$-smooth bounded convex domain $D$, then the domain is necessarily strictly pseudoconvex. Subsequently, in contrast to Problem \ref{prob1}, a bounded convex $\mathcal{C}^2$-smooth domain $\Omega\subset\mathbb{C}^{n+1}$ that is not strongly pseudoconvex was constructed in \cite{FW18}, satisfying $\displaystyle \lim_{z\to \partial \Omega}\sigma_{\Omega}(z)=1$. Moreover, the authors \cite[Example $1.2$]{NNN24} showed that $\sigma_{E_{1,2}}(\eta_j)\to 1$ as $j\to\infty$ for the sequence 
$$E_{1,2}\ni \eta_j=\left(\sqrt[4]{\frac{2}{j}-\frac{2}{j^2}}, 1-\frac{1}{j}\right)$$
converging tangentially to the non-strictly pseudoconvex boundary point $(0,1)$, where $E_{1,2}:=\{(z_1,z_2)\in \mathbb{C}^2\colon |z_2|^2+|z_1|^4<1\}$ (see also \cite[Theorem $1.10$]{NNN24} for the general case). In what follows, we demonstrate that this phenomenon occurs for much larger classes of domains. 

The second aim of this paper is to prove the following theorem, which tells us that the localized squeezing function approaches $1$ along a sequence converging uniformly $\Lambda$-tangentially to a strongly $h$-extendible boundary point (cf. Definition \ref{strongly-h-extendible} and Definition \ref{lambda-tangent}, respectively). 
\begin{theorem}\label{maintheorem2} Let $\Omega$ be a bounded pseudoconvex domain in $\mathbb{C}^{n+1}$ with $C^\infty$-smooth boundary. Let $\xi_0\in\partial \Omega$ be strongly $h$-extendible with Catlin's finite multitype $(2m_1,$ $ \ldots,2m_n, 1)$ and let $\Lambda=(1/2m_1, \ldots, 1/2m_n)$ (see Definition \ref{strongly-h-extendible}). If $\{\eta_j\}\subset \Omega$ is a sequence converging uniformly $\Lambda$-tangentially to $\xi_0 \in \partial \Omega$ (see Definition \ref{lambda-tangent}), then $\sigma_{\Omega\cap U_0}(\eta_j)\to 1$ as $j\to\infty$, where $U_0$ is a sufficiently small  neighborhood of $\xi_0$.
\end{theorem}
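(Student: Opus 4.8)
The plan is to prove $\sigma_\Omega(\eta_j)\to 1$ via Pinchuk's scaling method adapted to $h$-extendible boundary points.
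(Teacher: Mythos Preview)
Your proposal is not a proof; it is a one-line statement of intent. Saying ``use Pinchuk scaling adapted to $h$-extendible points'' identifies the correct strategy, but every substantive step is missing. In particular you have not supplied any of the following, each of which is essential and non-trivial:

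\begin{itemize}
\item The choice of scaling. The whole point of the uniform $\Lambda$-tangential hypothesis is that the standard multitype dilation $z_k\mapsto z_k/\epsilon_j^{1/2m_k}$ is the wrong one here; one must instead use
\[
\tau_{jk}=|\alpha_{jk}|\Big(\frac{\epsilon_j}{|\alpha_{jk}|^{2m_k}}\Big)^{1/2},
\]
after translating to $\eta_j'$ and killing the low-order pluriharmonic terms via an automorphism $Q_j$. Without specifying this, ``adapted scaling'' is vacuous.
\item The identification of the limit model. One must show that under these dilations the Taylor expansion of the defining function converges to $\mathrm{Re}(\tilde w)+H(\tilde z)$ with $H$ a \emph{constant-coefficient} Hermitian form. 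This uses conditions (a)--(c) of uniform $\Lambda$-tangential convergence to kill the higher-order and mixed terms; you have not indicated how.
\item The role of strong $h$-extendibility. The hypothesis $dd^cP\geq \delta\, dd^c\sigma$ is exactly what forces the limit form $H$ to be \emph{positive definite} (via Lemma~\ref{Cn-spherical-convergence}), so that $M_H$ is biholomorphic to the Siegel half-space and hence to $\mathbb{B}^{n+1}$. With mere $h$-extendibility the limit model need not be the ball and the argument collapses.
\item The passage from normal convergence of the scaled domains to $\sigma_\Omega(\eta_j)\to 1$. One still has to control $f_j(\Omega\setminus U_0)$ (it must land near a single boundary point of $\mathbb{B}^{n+1}$) and then sandwich $f_j(\Omega)$ between balls of radii $1\pm\epsilon$.
\end{itemize}

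In short, the approach you name is the paper's approach, but you have written none of it down. A proof must exhibit the scaling maps, verify the convergence, invoke the positivity lemma, and carry out the squeezing-function estimate.
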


The Pinchuk scaling method is useful for strongly pseudoconvex domains in $\mathbb{C}^{n+1}$ (cf. \cite{Wo77, Ro79, Pi91, Ef95}) and pseudoconvex domains of finite type in $\mathbb{C}^{2}$ (cf. \cite{BP89, Be03, Be06}). However, for pseudoconvex domains of finite type in higher dimensions, the Pinchuk scaling method does not work in general. Thus, we need the assumption that the domains are strongly $h$-extendible to apply our scaling techniques effectively. More specifically, the uniform $\Lambda$-tangential convergence of $\{\eta_j\}$ plays a substantial role in proving Theorem \ref{maintheorem2}. Let $\eta_j'$ be the projection of $\eta_j$ onto $\partial \Omega$ along the $\mathrm{Re}(w)$ direction. Then $\partial \Omega$ is strongly pseudoconvex at $\eta_j'$ for all sufficiently large $j$. Therefore, this allows us to choose a suitable scaling sequence so that our model is an analytic ellipsoid that is biholomorphically equivalent to $\mathbb{B}^{n+1}$, and hence Theorem \ref{maintheorem2} follows.

Now we turn to bounded pseudoconvex domains in $\mathbb{C}^2$. First of all, let us consider the following problem, first posed in \cite[Problem $4.3$]{FW18}.
\begin{problem}\label{pb2} Let $\Omega\subset \mathbb{C}^2$ be a bounded pseudoconvex domain of class $\mathcal{C}^\infty$. Is $\sigma_{\Omega}$ bounded away from zero?
\end{problem}
It is noted that if the answer to this question is affirmative, then the domain $\Omega$ is called holomorphic homogeneous regular (cf. \cite{DGZ16}). The main results concerning this property are due to \cite{Ye09, DGZ16, KZ16, NA17}. Examples of holomorphic homogeneous regular domains in all dimensions include bounded strictly pseudoconvex domains of class $\mathcal{C}^2$ (cf. \cite{DGZ16}), bounded convex domains (cf. \cite{Ye09, KZ16}), and bounded $\mathbb{C}$-convex domains (cf. \cite{NA17}). However, the answer to Problem \ref{pb2} is negative in general for domains in higher dimensions (cf. \cite{FR18}).

For a bounded pseudoconvex domain of D'Angelo finite type $\Omega$ in $\mathbb{C}^2$ and a boundary point $\xi_0\in \partial \Omega$, following the proofs given in \cite{Be03} (or in \cite{BP89} for the real-analytic boundary), for each sequence $\{\eta_j\}\subset \Omega$ that converges to $\xi_0$, there exists a scaling sequence $\{f_j\}\subset\mathrm{Aut}(\mathbb{C}^2)$ such that $f_j(\eta_j)$ converges to $(0,-1)$ and $f_j(\Omega)$ converges normally to a model 
$$
M_P=\{(z,w)\in \mathbb{C}^2\colon \mathrm{Re}(w)+P(z)<0\},
$$
where $P$ is a subharmonic polynomial of degree $\leq 2m$, with $2m$ being the type of $\Omega$ at $\xi_0$, without harmonic terms. 

Let us emphasize that the local model $M_P$ depends deeply on the boundary behavior of $\{\eta_j\}$, i.e., the boundary behavior of $\{\eta_j\}$ suggests some choice of a scaling sequence $\{f_j\}$. If $M_P$ is biholomorphically equivalent to a bounded domain, then $\sigma_{M_P}(z)$ is well-defined and by the invariance of the squeezing function one sees that $\sigma_{\Omega}(\eta_j)$ is bounded from below by a positive constant. In the case when $\{\eta_j\}$ converges $\big(\dfrac{1}{2m}\big)$-non-tangentially to $\xi_0$ (see Definition $3.4$ in \cite{NN20}), the model $M_P$ is exactly the associated model for $(\Omega, \xi_0)$ (see Definition \ref{def-order}). Then $\sigma_{\Omega}(\eta_j)$ is bounded from below by a positive constant, provided that the associated model is biholomorphically equivalent to a bounded domain. 

The rest of the paper considers the case when $\{\eta_j\}$ accumulates at $\xi_0$ very tangentially to $\partial \Omega$ -- the remaining possibility. For a bounded pseudoconvex domain of D'Angelo finite type $\Omega$ in $\mathbb{C}^2$, the point $\xi_0$ is $h$-extendible ( see Definition \ref{def-order}). In addition, if $\xi_0$ is strongly $h$-extendible, then the squeezing function tends to $1$ along any sequence converging uniformly $\Lambda$-tangentially to $\xi_0$ by Theorem \ref{maintheorem2}. However, without the strongly $h$-extendibility, the notion of spherically $\frac{1}{2m}$-tangential convergence is necessary to determine if the squeezing function approaches $1$ (cf. Definition \ref{spherically-convergence}).

More precisely, the third aim of this paper is to prove the following theorem.
\begin{theorem}\label{maintheorem3}
Let $\Omega$ be a bounded domain in $\mathbb{C}^2$ and $\xi_0\in \partial \Omega$. Suppose that $\partial \Omega$ is $\mathcal{C}^\infty$-smooth, pseudoconvex and of D'Angelo finite type near $\xi_0$. If $\{\eta_j\}\subset \Omega$ is a sequence converging spherically $\frac{1}{2m}$-tangentially to $\xi_0 \in \partial \Omega$, then $\sigma_{\Omega\cap U_0}(\eta_j)\to 1$ as $j\to\infty$, where $2m$ is the type of $\partial \Omega$ at $\xi_0$, where $U_0$ is a sufficiently small neighborhood of $\xi_0$.
\end{theorem}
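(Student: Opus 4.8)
The plan is to run the $\mathbb{C}^2$ scaling machinery recalled above, but to choose the dilations so that the limit model is the unit ball rather than a genuinely weakly pseudoconvex polynomial model $M_P$. After a local biholomorphic change of coordinates we may assume $\xi_0=0$ and that $\Omega$ is defined near $0$ by
$$\rho(z,w)=\mathrm{Re}(w)+P(z)+(\text{higher order terms}),$$
with $P$ a subharmonic polynomial of degree $2m$ without harmonic terms. The guiding observation is that, since $P_{z\bar z}$ is a nonzero polynomial of degree $2m-2$, the boundary is strongly pseudoconvex at every nearby point where $P_{z\bar z}$ does not vanish. The first step is therefore to read off from the definition of spherically $\frac{1}{2m}$-tangential convergence (Definition \ref{spherically-convergence}) that the projections $\eta_j'$ of $\eta_j=(z_j,w_j)$ onto $\partial\Omega$ along the $\mathrm{Re}(w)$-direction are strongly pseudoconvex boundary points for all large $j$, with $\eta_j'\to\xi_0$, and to record the precise rate at which the Levi eigenvalue $\lambda_j:=P_{z\bar z}(z_j)$ at $\eta_j'$ degenerates to $0$ together with the boundary distance $\delta_j$ of $\eta_j$.

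Next I would build the scaling sequence $\{f_j\}\subset\mathrm{Aut}(\mathbb{C}^2)$ centered at $\eta_j'$: a translation bringing $\eta_j'$ to the origin, a triangular shear absorbing the pluriharmonic part of the Taylor expansion of $P$ about $z_j$, followed by the anisotropic dilation dictated by the \emph{local strongly pseudoconvex geometry} at $\eta_j'$, namely $w\mapsto w/\delta_j$ and $z\mapsto z\,(\lambda_j/\delta_j)^{1/2}$. With this choice $f_j(\eta_j)$ converges to the distinguished point $(0,-1)$, and the quadratic (Levi) term of $\rho$ is normalized to $|z|^2$. I would then verify, using the quantitative spherically tangential bounds, that every non-quadratic monomial of $P$ and every higher order remainder term rescales with a coefficient tending to $0$; the decisive feature is that the approach is so tangential — roughly $\delta_j\ll|z_j|^{2m}$ in the model case — that all cubic and higher order contributions vanish in the limit, leaving only the normalized Levi term. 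Consequently $f_j(\Omega)$ converges normally to the spherical model $\{(z,w):\mathrm{Re}(w)+|z|^2<0\}$, which is biholomorphic to the ball $\mathbb{B}^2$.

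Finally I would invoke the biholomorphic invariance of the squeezing function, $\sigma_\Omega(\eta_j)=\sigma_{f_j(\Omega)}(f_j(\eta_j))$, together with the stability of the squeezing function under normal convergence of domains already developed for Theorems \ref{maintheorem1} and \ref{maintheorem2}. Since $f_j(\Omega)$ converges to the ball and $f_j(\eta_j)\to(0,-1)$, this yields $\liminf_j\sigma_{f_j(\Omega)}(f_j(\eta_j))\ge\sigma_{\mathbb{B}^2}((0,-1))=1$; as $\sigma_\Omega\le 1$ everywhere, it follows that $\sigma_\Omega(\eta_j)\to 1$.

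I expect the main obstacle to be the convergence step. Because $\lambda_j\to 0$ as $\eta_j'\to\xi_0$, the dilations are increasingly degenerate, so one must control the rescaled defining function uniformly on compact subsets and show that its sublevel sets fill out exactly the spherical model — both that the scaled domains eventually contain every compact subset of $\{\mathrm{Re}(w)+|z|^2<0\}$ and that they do not spread beyond it. Carrying out these uniform estimates for a \emph{general} subharmonic polynomial $P$ (rather than the monomial $|z|^{2m}$), and verifying the normal-family compactness needed for kernel convergence, is the technical heart of the argument; once it is in place, the passage to the squeezing function is routine via the invariance and stability already established.
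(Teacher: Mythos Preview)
Your proposal is correct and follows essentially the same route as the paper: center at the $\mathrm{Re}(w)$-projection $\eta_j'$, shear away the harmonic part of the Taylor expansion at $\eta_j'$, dilate anisotropically by $(z,w)\mapsto(z/\tau_j,\,w/\epsilon_j)$ with $\tau_j\approx(\epsilon_j/\lambda_j)^{1/2}\approx|\alpha_j|(\epsilon_j/|\alpha_j|^{2m})^{1/2}$, use conditions (a)--(c) to kill all non-quadratic terms, and land on the Siegel half-space. The only point to sharpen is your final sentence: the paper does not claim that $f_j(\Omega)$ itself converges normally to the ball, but rather that $f_j(\Omega\cap U_0)$ does while, after composing with the Cayley transform $\Psi$, the far part $f_j(\Omega\setminus U_0)$ is trapped in an arbitrarily small neighborhood of the boundary point $(0,-1)\in\partial\mathbb{B}^2$; this two-piece sandwich $B(0,1-\epsilon)\subset F_j(\Omega)\subset B(0,1+\epsilon)$ is what actually yields $\sigma_\Omega(\eta_j)\ge(1-\epsilon)/(1+\epsilon)$.
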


Altogether, the results of this paper represent a significant step toward solving Problem \ref{pb2}. The remaining cases to resolve are: (i) removing condition (a) in Definitions \ref{lambda-tangent} and \ref{spherically-convergence}, and (ii) the case where the sequence $\{\eta_j\}$ converges non-spherically (see Example \ref{rmk5.2}). Condition (a) is necessary due to technical restrictions imposed by the scaling method employed in this paper. However, if $\eta_j'$ is chosen as the orthogonal projection of $\eta_j$ onto the boundary, an alternative scaling technique may be applied (see Example \ref{Ex5.1}).

The organization of this paper is as follows. In Section \ref{technical-section}, we recall basic definitions and results needed later. In Section \ref{stronglypsc}, we verify the geometry of strongly pseudoconvex hypersurfaces and prove Theorem \ref{maintheorem1}. In Section \ref{S-h-extendible}, we recall the notion of strongly $\Lambda$-tangential convergence and prove Theorem \ref{maintheorem2}. Finally, the proof of Theorem \ref{maintheorem3} is given in Section \ref{S5}.
\section{Preliminaries}\label{technical-section}

\subsection{Normal convergence} 
We recall the following definition (see \cite{GK87, Kr21}, or \cite{DN09}). 
\begin{define} Let $\{D_j\}_{j=1}^\infty$ be a sequence of domains in $\mathbb C^n$. The sequence $\{D_j\}_{j=1}^\infty$ is said to \emph{converge normally} to a domain $D_0\subset \mathbb C^n$ if the following two conditions hold:
	\begin{enumerate}
		\item[(i)] If a compact set $K$ is contained in the interior (i.e., the largest open subset) of  $\displaystyle\bigcap_{j\geq m} \Omega_j$ for some positive integer $m$, then $K\subset D_0$.
		\item[(ii)] If a compact subset $K'\subset D_0$, then there exists a constant $m>0$ such that $\displaystyle K'\subset \bigcap_{j\geq m} D_j$.
	\end{enumerate}  
Furthermore, when a sequence of  map $f_j\colon D_j\to\mathbb C^m$ converges uniformly on compact sets (also known as uniformly on compacta) to a map $\varphi_j\colon D\to\mathbb C^m$ then we shall say that $\varphi_j$ \emph{converges normally} to $\varphi$. 
\end{define}

\subsection{Catlin's multitype} 
For the convenience of the exposition, let us recall \emph{Catlin's multitype}~(for more details, we refer to \cite{Cat84, Yu95} and the references therein). Let $\Omega$ be a domain in $\mathbb C^n$ and $\rho$ be a defining function for $\Omega$ near $p\in \partial\Omega$. Let us denote by $\Gamma^n$ the set of all $n$-tuples of numbers $\mu=(\mu_1,\ldots,\mu_n)$ such that
\begin{itemize}
\item[(i)] $1\leq \mu_1\leq \cdots\leq\mu_n\leq +\infty$;
\item[(ii)] For each $j$, either $\mu_j=+\infty$ or there is a set of non-negative integers $k_1,\ldots,k_j$ with $k_j>0$ such that
\[
\sum_{s=1}^j \frac{k_s}{\mu_s}=1.
\]
\end{itemize}

A weight $\mu\in \Gamma^n$ is called \emph{distinguished} if there exist holomorphic coordinates $(z_1,\ldots,z_n)$ about $p$ with $p$ maps to the origin such that 
\[
D^\alpha \overline{D}^\beta \rho(p)=0~\text{whenever}~\sum_{i=1}^n\frac{\alpha_i+\beta_i}{\mu_i}<1.
\]
Here and in what follows, $D^\alpha$ and $\overline{D}^\beta$ denote the partial differential operators
\[
\frac{\partial^{|\alpha|}}{\partial z_1^{\alpha_1}\cdots \partial z_n^{\alpha_n }}~\text{and}~\frac{\partial^{|\beta|}}{\partial \bar z_1^{\beta_1}\cdots \partial \bar z_n^{\beta_n }},
\]
respectively. 
\begin{define}
The \emph{multitype} $\mathcal{M}(z_0)$ is defined to be the smallest weight $\mathcal{M}=
(m_1,\ldots,m_n)$ in $\Gamma^n$~(smallest in the lexicographic sense) such that $\mathcal{M}\geq \mu$ for every distinguished weight $\mu$.
\end{define}
 \subsection{The $h$-extendibility }
In what follows, we call a multiindex $(\lambda_1,\lambda_2, \ldots, \lambda_n)$ a \emph{multiweight} if $1\geq \lambda_1\geq \cdots\geq \lambda_n$. Now we recall the following definitions (cf. \cite{Yu94, Yu95}).  
\begin{define} \label{def-28}Let $f(z)$ be a function on $\mathbb  C^n$ and let $\Lambda=(\lambda_1,\lambda_2, \ldots, \lambda_n)$ be a multiweight. For any real number $t\geq 0$, set
$$
\pi_t(z)=(t^{\lambda_1}z_1,t^{\lambda_2}z_2, \ldots,t^{\lambda_n}z_n).
$$ 
We say that $f$ is \emph{$\Lambda$-homogeneous with weight $\alpha$} if $f(\pi_t(z))=t^\alpha f(z)$ for every $t\geq 0$ and $z\in \mathbb C^n$. In case $\alpha=1$, then $f$ is simply called \emph{$\Lambda$-homogeneous}.
\end{define} 

For a multiweight $\Lambda$, the following function
$$
\sigma(z)=\sigma_\Lambda(z):=\sum_{j=1}^n |z_j|^{1/\lambda_j}
$$
 is $\Lambda$-homogeneous. Moreover, for a multiweight $\Lambda$ and a real-valued $\Lambda$-homogeneous function $P$, we define a homogeneous model $D_{\Lambda,P}$ as follows:
 $$
D_{\Lambda,P}=\left\{ (z,w)\in \mathbb C^n\times \mathbb C\colon \mathrm{Re}(w)+ P(z)<0 \right\}.
 $$
 \begin{define} Let $D_{\Lambda,P}$ be a homogeneous model. Then $D_{\Lambda,P}$ is called \emph{$h$-extendible} if there exists a $\Lambda$-homogeneous $\mathcal{C}^1$ function $a(z)$ on $\mathbb C^n\setminus\{0\}$ satisfying the following conditions:
 \begin{itemize}
 \item[(i)] $a(z)>0$ whenever $z\ne 0$;
 \item[(ii)] $P(z)-a(z)$ is plurisubharmonic on $\mathbb C^n$. 
  \end{itemize}
We will call $a(z)$ a \emph{bumping function}. 
 \end{define}

By a pointed domain $(\Omega,p)$ in $\mathbb C^{n+1}$ we mean that $\Omega$ is a smooth pseudoconvex domain in $\mathbb C^{n+1}$ with $p\in \partial\Omega$. Let $\rho$ be a local defining function for $\Omega$ near $p$ and let the multitype $\mathcal{M}(p)=(2m_1,\ldots,2m_n, 1)$ be finite. We note that because of pseudoconvexity, the integers $2m_1,\ldots,2m_n$ are all even. 

 By the definition of multitype, there exist distinguished coordinates $(z,w)=(z_1,\ldots,z_n,w)$ such that $p=(0',0)$ and $\rho(z,w)$ can be expanded near $(0',0)$ as follows:
$$
\rho(z,w)=\mathrm{Re}(w)+P(z)+R(z,w),
$$ 
 where $P$ is a $(1/2m_1,\ldots,1/2m_n)$-homogeneous plurisubharmonic polynomial that contains no pluriharmonic terms, $R$ is smooth and satisfies 
 $$
 |R(z,w)|\leq C \left( |w|+ \sum_{j=1}^n |z_j|^{2m_j} \right)^\gamma,
 $$ 
 for some constant $\gamma>1$ and $C>0$. 
 
 In what follows, we assign weights $\frac{1}{2m_1}, \ldots,\frac{1}{2m_{n}}, 1$ to the variables $z_1,\ldots, z_{n}, w$, respectively and denote by $wt(K):=\sum_{j=1}^{n} \frac{k_j}{2m_j}$ the weight of an $n$-tuple $K=(k_1, \ldots, k_{n})\in \mathbb Z^{n}_{\geq0}$. We note that  $wt(K+L)=wt(K)+wt(L)$ for any $K, L\in \mathbb Z^{n}_{\geq0}$.  In addition, $\lesssim$ and $\gtrsim$ denote inequality up to a positive constant. Moreover, we will use $\approx $ for the combination of $\lesssim$ and $\gtrsim$. 
\begin{define}\label{def-order} We call $M_P=\{(z,w)\in \mathbb C^n\times \mathbb C\colon \mathrm{Re}(w)+P(z)<0\}$ an \emph{associated model} for $(\Omega,p)$. If the pointed domain $(\Omega,p)$ has an $h$-extendible  associated model, we say that $(\Omega,p)$ is \emph{$h$-extendible}. 
\end{define}

Next, we recall the following definition (cf. \cite{Yu95}).
\begin{define} Let $\Lambda=(\lambda_1,\ldots,\lambda_n)$ be a fixed $n$-tuple of positive numbers and $\mu>0$. We denote by $\mathcal{O}(\mu,\Lambda)$ the set of smooth functions $f$ defined near the origin of $\mathbb C^n$ such that
$$
D^\alpha \overline{D}^\beta f(0)=0~\text{whenever}~ \sum_{j=1}^n (\alpha_j+\beta_j)\lambda_j \leq \mu.
$$
In addition, we use $\mathcal{O}(\mu)$ to denote the functions of one variable, defined near the origin of $\mathbb C$,  vanishing to order at least $\mu$ at the origin. 
\end{define}

\section{The boundary behavior of the squeezing function near a strongly pseudoconvex point}\label{stronglypsc}

\subsection{Geometry of strongly pseudoconvex hypersurfaces}

In this subsection, we consider a domain $D$ in $\mathbb{C}^{n+1}$ that is strongly pseudoconvex at $\xi_0 \in \partial D$. After a change of variables, there are the coordinate functions $(z,w)=(z_1,\ldots, z_n,w)$ such that $\xi_0=(0',0)$ and  $\rho(z,w)$, the local defining function for $\Omega$ near $\xi_0$, can be expanded near $(0',0)$ as follows:
\begin{equation*}
\rho(z, w) = \mathrm{Re}(w) + |z|^2+ O(|w||z|+|z|^3+|w|^2).
\end{equation*}

The following proposition plays a central role in the proof of Theorem \ref{maintheorem1}.  Although a proof of this proposition is a minor modification of that given in \cite[Assertion~1]{Ef95}, we shall give a detailed proof for the reader's convenience.
\begin{proposition}\label{prop-1}
Let $D$ be a domain in $\mathbb{C}^{n+1}$ and $\xi_0\in \partial D$. Suppose that $\partial D$ is $\mathcal{C}^2$-smooth near $\xi_0$ and strongly pseudoconvex at $\xi_0 $. Then for each $\eta$, there exists a globally biholomorphic coordinate transformation $\Phi_{\eta}\colon  \mathbb{C}^{n+1} \to \mathbb{C}^{n+1}$ such that the function $\rho(z,w)$ locally defining $D$ has the following form in the new coordinates:
\begin{equation*}
\rho\circ \Phi_{\eta}^{-1}(z,w) =  \mathrm{Re}(w) + |z|^2 +O(|w||z|+|z|^3+|w|^2).
\end{equation*}
\end{proposition}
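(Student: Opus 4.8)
The plan is to construct $\Phi_{\eta}$ as a composition of four explicit polynomial automorphisms of $\mathbb{C}^{n+1}$ — a translation, a complex-linear map, a parabolic shear in the $w$-variable, and a final linear map in the $z$-variables. Each factor is a biholomorphism of all of $\mathbb{C}^{n+1}$ with polynomial inverse (translations and invertible linear maps obviously so, and the shear $(z,w)\mapsto (z, w+q(z))$ with $q$ a polynomial is invertible by $(z,w)\mapsto(z,w-q(z))$), so the composition $\Phi_{\eta}$ is globally biholomorphic, as the statement demands.

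First I would let $\eta'\in\partial D$ be the boundary point associated to $\eta$ (its projection onto $\partial D$ along the $\mathrm{Re}(w)$ direction) and translate $\eta'$ to the origin. Since $\partial D$ is $\mathcal{C}^2$ and strongly pseudoconvex at $\xi_0$, and strong pseudoconvexity is an open condition, the point $\eta'$ is again strongly pseudoconvex once $\eta$ is close to $\xi_0$; in particular $d\rho(\eta')\neq 0$ and the Levi form at $\eta'$ is positive definite on the complex tangent space. I would then apply a unitary (complex-linear) transformation sending the complex normal direction at $\eta'$ to the $w$-axis, so that the holomorphic gradient of the defining function becomes a nonzero multiple of $dw$; rescaling $w$ makes the linear part exactly $\mathrm{Re}(w)$.

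Next I would Taylor-expand the transformed defining function to second order. Its degree-$2$ part splits into a pure holomorphic polynomial in $(z,w)$, its conjugate, and a Hermitian form in $z$. The holomorphic terms $wz_j$ and $w^2$ are already of size $O(|w||z|)$ and $O(|w|^2)$, hence harmless; the only holomorphic term of the same order as $|z|^2$ is the pure quadratic $\sum_{i,j} a_{ij} z_i z_j$. I would remove it by the parabolic shear $w\mapsto w + \sum_{i,j} a_{ij} z_i z_j$, which absorbs the contribution $2\,\mathrm{Re}(\sum_{i,j} a_{ij} z_i z_j)$ into $\mathrm{Re}(w)$ while changing the remaining second-order data only within the allowed error. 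The surviving Hermitian part is positive definite by strong pseudoconvexity at $\eta'$, so a final complex-linear change of the $z$-variables diagonalizes it to $|z|^2$.

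The step I expect to require the most care is the bookkeeping of the remainder: after each substitution one must verify that every term not displayed is controlled by $O(|w||z|+|z|^3+|w|^2)$. Because the shear and the linear maps are polynomial of degree $\leq 2$, they carry the original higher-order remainder $O(|w||z|+|z|^3+|w|^2)$ into a remainder of the same type and produce no new lower-order terms once the pure holomorphic quadratic has been cancelled. Tracking these compositions — and checking that the Levi-form positivity used in the last step persists uniformly for $\eta$ near $\xi_0$ — is the main (though essentially routine) obstacle; everything else is the classical strong-pseudoconvexity normalization, carried out at the moving base point $\eta'$ rather than at $\xi_0$ itself.
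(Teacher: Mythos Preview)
Your proposal is correct and follows essentially the same approach as the paper: both construct $\Phi_\eta$ as a composition of a translation to $\eta'$, a complex-linear normalization of the tangent space, a parabolic shear absorbing the pure holomorphic quadratic, and a linear change in $z$ diagonalizing the Levi form. The only inessential differences are that the paper takes $\eta'$ to be the nearest boundary point (rather than the $\mathrm{Re}(w)$-projection) and performs the diagonalization \emph{before} the shear rather than after; neither affects the argument.
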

\begin{proof}
For each $\eta$, we denote by $\eta'$ the point of $\partial D$ closest to $\eta$. Then we first denote by $\varphi_1\in \mathrm{Aut}(\mathbb C^{n+1})$ the composite of the shift $\eta_j \to (0',0)$ and a unitary map taking the complex tangent $T_{\eta_j'}^{\mathbb{C}}(\partial \Omega)$ to the plane $\{v = 0\}$ such that in the new coordinate $(u,v)$ we have
$$
L_{\eta}(\eta)=(0', -\epsilon); L_{\eta}(\eta')=(0',0); 
$$
where $\epsilon$ is the distance from $\eta$ to $\partial \Omega$. Moreover, the tangent to $\partial\Omega$ at $(0', 0)$ is $\{\mathrm{Re}(v) = 0\}$ and the Taylor expansion of the function $\rho\circ \varphi_1^{-1}(u,v)$ locally defining $D$ in a neighbourhood of the origin has the form
\begin{equation*}
\rho\circ \varphi_1^{-1}(u,v) = \mathrm{Re}~ L_\eta(u,v) + \frac{1}{2}H_\eta(u,v) + \mathrm{Re}~ K_\eta(u,v) + o(|u|^2)+o(|v|^2),
\end{equation*}
where
\begin{align*}
L_\eta(u,v) &=2\frac{\partial \rho(\eta')}{\partial v} v+2 \sum_{i=1}^{n} \frac{\partial \rho(\eta')}{\partial u_i} u_i;\\
K_\eta(u,v) &= \sum_{i,j=1}^{n} \frac{\partial^2 \rho(\eta')}{\partial u_i \partial u_j} u_i u_j+\sum_{i=1}^{n} \frac{\partial^2 \rho(\eta')}{\partial u_i \partial v} u_i v + \sum_{i=1}^{n} \frac{\partial^2 \rho(\eta')}{\partial v \partial u_i} v u_i + \frac{\partial^2 \rho(\eta')}{\partial v \partial v} v v;\\
H_\eta(u,v)& = \sum_{i,j=1}^{n} \frac{\partial^2 \rho(\eta')}{\partial u_i \partial \bar{u}_j} u_i \bar{u}_j+\sum_{i=1}^{n} \frac{\partial^2 \rho(\eta')}{\partial u_i \partial \bar v} u_i \bar v + \sum_{i=1}^{n} \frac{\partial^2 \rho(\eta')}{\partial v \partial \bar{u}_i} v \bar{u}_i + \frac{\partial^2 \rho(\eta')}{\partial v \partial \bar v} v \bar v.
\end{align*}

Next, it is standard to perform the change of coordinates $(z,w)=\varphi_2(u,v)$, defined by
\begin{align*}
w&=L_\eta(u,v);\\
z_j&=u_j, \; 1\leq j\leq n.
\end{align*}
Hence, in the coordinates $(z,w)$, the function $\rho\circ \varphi_1^{-1}\circ \varphi_2^{-1}(z,w)$ has the following form
\begin{equation*}
\rho\circ \varphi_1\circ \varphi_2(z,w) =  \mathrm{Re}(w) + \frac{1}{2}H_\eta(z, w) +   \mathrm{Re}~ K_\eta(z,w) + O(|z|^3)+O(|w|^2).
\end{equation*}

Furthermore, since $\partial D$ is strongly pseudoconvex at $\xi_0 $, it follows that $H_\eta(z, 0)$ is a strictly positive Hermitian square form and there exists a linear change of the variables $z_1, \ldots, z_n$, say $P$, that reduces this form to $2|z|^2$. Hence, we define $\varphi_3\in \mathrm{Aut}(\mathbb C^{n+1})$  by
\begin{align*}
u&=P(z);\\
v&=w.
\end{align*}
Then, the defining function $\rho$ can be written in the new coordinates as
\begin{equation*}
\rho\circ \varphi_1^{-1}\circ \varphi_2^{-1}\circ \varphi_3^{-1}(u,v) = \mathrm{Re}(v)+ |u|^2+ \mathrm{Re}~K_\eta(u,v) + O(|u|^3)+O(|v|^2).
\end{equation*}

Finally, we may also perform a change of coordinates $(z,w)=\varphi_4(u,v)$, given by
\begin{align*}
w&=v + K_\eta(u, 0);\\
z&=u.
\end{align*}
The defining function $\rho$ then has the desired expression
$$
\rho\circ \varphi_1^{-1}\circ \varphi_2^{-1}\circ \varphi_3^{-1}\circ \varphi_4^{-1}(z,w)=\mathrm{Re}(w) + |z|^2+O(|w||z|+|z|^3+|w|^2).
$$
Therefore, the required map can be written as $\Phi_\eta =\varphi_4\circ \varphi_3\circ \varphi_2\circ \varphi_1$, and thus the proof is eventually complete.
\end{proof}

\subsection{Proof of Theorem \ref{maintheorem1}}

Let $\Omega$ and $\xi_0\in \partial \Omega $ be as in the statement of Theorem \ref{maintheorem2}. Let $\{\eta_j\}\subset \Omega $ be any sequence converging to  $\xi_0 \in \partial \Omega$ and then we are going to prove that $\sigma_\Omega(\eta_j)\to 1$ as $j\to\infty$. Indeed, we may assume that $\{\eta_j=(\alpha_j,\beta_j)\}_{j\geq 1}\subset U_0^-:=U_0\cap\{\rho<0\}$ for a fixed neighborhood $U_0$ of $\xi_0$ and we associate with each $\eta_j$ a point $\eta_j'\in\partial \Omega$ that is closest to $\eta_j$.

It follows from Proposition \ref{prop-1} that there is a biholomorphism $\Phi_{\eta_j'}$ of $\mathbb C^{n+1}$, $(z,w)=\Phi^{-1}_{\eta_j'}(\tilde z, \tilde w)$ such that $\Phi_{\eta_j'}(\eta_j)=(0',-\epsilon_j), \Phi_{\eta_j'}(\eta_j')=(0',0)$ and
\begin{equation}\label{Eq500}
\rho \circ \Phi_{\eta_j'}^{-1}(\tilde z, \tilde w)  = \mathrm{Re}(\tilde w) + |\tilde z|^2 +O(|\tilde w||\tilde z|+|\tilde z|^3+|\tilde w|^2). 
\end{equation}
 Shrinking $U_0$ if necessary, we may assume that $\Phi_{\eta_j'}(U_0\cap \Omega)$ is contained in the domain $\mathcal{E}:=\big\{(\tilde z, \tilde w)\in \mathbb C^{n+1}\colon A\, \mathrm{Re}(\tilde w) + |\tilde z|^2<B(|\tilde w||\tilde z|+|\tilde w|^2)\big\})$ for all all sufficiently large $j$, where $A,B$ ($A>1$) are positive constants, independent of $j$.

Now let us define $\tau(\eta', \epsilon_j) := \sqrt{\epsilon_j}$ and define an anisotropic dilation $\Lambda_j\in \mathrm{Aut}(\mathbb C^{n+1})$ by
\begin{equation*}
\Lambda_j(z, w) = \Big(\frac{z_1}{\tau(\eta', \epsilon_j)}, \ldots, \frac{z_n}{\tau(\eta', \epsilon_j)}, \frac{w}{\epsilon_j}\Big).
\end{equation*}
Then one sees that $\Delta_j\circ \Phi_{\eta_j'}(\eta_j)=(0',-1),\; \forall j\in \mathbb N_{\geq 1}$. Furthermore, for each $j\in \mathbb N_{\geq 1}$, if we set $\rho_j(z,w)=\epsilon_j^{-1}\rho\circ \Phi_{\eta_j'}^{-1}\circ(\Delta_j)^{-1}(z,w)$, then (\ref{Eq500}) implies that
\begin{equation*}\label{def-jj}
\rho_j(z,w)=\mathrm{Re}(w)+ |z|^2+O(\tau(\eta_j',\epsilon_j)).
\end{equation*}
Therefore, passing to a subsequence if necessary, we may assume that the sequence $\Omega_j:=\Delta_j\circ \Phi_{\eta_j'}(\Omega\cap U_0) $ converges normally to the Siegel half-space
$$
\mathcal{U}_{n+1}:=\{(z,w)\in \mathbb{C}^{n+1} \colon\mathrm{Re}(w) +|z|^2<0\}.
$$
In addition,  the holomorphic map $\Psi $, defined by
\[
(z,w)\mapsto  \Big( \frac{2z_1}{1-w},\ldots, \frac{2z_n}{1-w},  \frac{w+1}{1-w}\Big),
\]
is a biholomorphism from $\mathcal{U}_{n+1}$ onto $\mathbb B^{n+1}$.

Next, let us consider the sequence of biholomorphic map $f_j:=\Psi\circ \Delta_j\circ \Phi_{\eta_j'} \colon \Omega\cap U_0 \to \Psi(\Omega_j)$.  Then, for any $(z,w)\in U_0\cap \Omega$, the point $(z',w'):= \Delta_j\circ \Phi_{\eta_j'}(z,w)\in \Omega_j$ satisfies
\[
\mathrm{Re}(w')\gtrsim -\dfrac{1}{\epsilon_j}; \quad |z_k'|\lesssim \frac{1}{\sqrt{\epsilon_j}},\quad 1\leq k\leq n.
\]
Moreover, for all sufficiently large $j$, we have 
\[
\Omega_j\subset \Delta_j(\mathcal{E}) =\{('z','w')\in \mathbb{C}^{n+1}\colon A\, \mathrm{Re}(w') + |\tilde z|^2<B(\sqrt{\epsilon_j}|w'||z'|+\epsilon_j|w'|^2)\}.
\]
 Consequently, $|z'|^2 \lesssim |\mathrm{Re}(w')|$ for all $(z',w')\in \Omega_j$. In addition, for $(\tilde z,\tilde w)=\Psi(z',w')$ with $(z',w')\in \Omega_j$, we have the estimates
\begin{align*}
|\tilde w+1|=\Big|\dfrac{2}{1-w'}\Big|\lesssim \dfrac{1}{|\mathrm{Re}(w')|},\quad |\tilde z_k|=\Big|\frac{2z'_k}{1-w'}\Big|\lesssim \frac{\sqrt{|\mathrm{Re}(w')|}}{|\mathrm{Re}(w')|-1}\lesssim \frac{1}{\sqrt{|\mathrm{Re}(w')|}}
\end{align*}
for $1\leq k\leq n$. Therefore, for any sufficiently small $\epsilon > 0$, there exist $M>0$ and $j_0 \in \mathbb{N}_{\geq 1}$ such that the image under $\Psi$ of the domain
\[
\Big\{(z',w')\in \Omega_j\colon \mathrm{Re}(w')< -M\Big\}
\]
is contained in $B\big((0',-1), \epsilon/2\big)$ for all $j\geq j_0$. In addition, the sequence $\big\{(z',w')\in \overline{\Omega_j}\colon \mathrm{Re}(w')\geq -M\big\}$ converges to the set $\big\{(z',w')\in \mathcal{U}_{n+1}\colon \mathrm{Re}(w')\geq -M\big\}$ in the Hausdorff sense. Furthermore, we have
\[
f_j(\eta_j) = \Psi\circ \Theta(0', -1) =(0', 0) \quad \text{for all } j \in \mathbb N_{\geq 1}.
\]
This yields that for any sufficiently small $\epsilon > 0$, there exists $j_0 \in \mathbb{N}_{\geq 1}$ such that
\[
B\big((0',0), 1 - \epsilon\big) \subset f_j(U_0\cap \Omega) \subset B\big((0',0), 1 + \epsilon\big), \quad \forall j \geq j_0, 
\]
and hence
\[
\sigma_{U_0\cap \Omega}(\eta_j) \geq \frac{1 - \epsilon}{1 + \epsilon}, \quad \forall j \geq j_0,
\]
Since $\epsilon>0$ is arbitrary, we conclude that $\displaystyle \lim_{j\to \infty}\sigma_{U_0\cap \Omega}(\eta_j)=1$, and  thus the proof of Theorem~\ref{maintheorem1} is complete. \hfill $\Box$

\section{The boundary behavior of the squeezing function near a strongly $h$-extendible point}\label{S-h-extendible}
\subsection{$\Lambda$-tangential convergence}\label{Ss3.1}

Throughout this subsection, let $\Omega$ be a domain in $\mathbb C^{n+1}$ and assume that $\xi_0\in \partial \Omega $ is an $h$-extendible boundary point \cite{Yu95} (or, semiregular point in the terminology of \cite{DH94}). Let $\mathcal{M}(\xi_0)=(2m_1,\ldots,2m_n,1)$ be the finite multitype of  $\partial \Omega$ at $\xi_0$ (see \cite{Cat84}). (Note that because of the pseudoconvexity of $\Omega$, the integers $2m_1,\ldots,2m_n$ are all even.) Let us denote by $\Lambda=(1/2m_1,\ldots,1/2m_n)$. By following the proofs of Lemmas $4.10$, $4.11$ in \cite{Yu95}, after a change of variables there are the coordinate functions $(z,w)=(z_1,\ldots, z_n,w)$ such that $\xi_0=(0',0)$ and  $\rho(z,w)$, the local defining function for $\Omega$ near $\xi_0$, can be expanded near $(0',0)$ as follows:
$$
\rho(z,w)=\mathrm{Re}(w)+ P(z) +R_1(z) + R_2(\mathrm{Im} w)+(\mathrm{Im} w) R(z),
 $$ 
 where $P$ is a $\Lambda$-homogeneous plurisubharmonic polynomial that contains no pluriharmonic monomials, $R_1\in \mathcal{O}(1, \Lambda),R\in \mathcal{O}(1/2, \Lambda) $, and $R_2\in \mathcal{O}(2)$.

We know that a sequence $\{\eta_j\}\subset \Omega$ converges $\Lambda$-nontangentially to $\xi_0$ if $|\mathrm{Im}(\beta_j)|\lesssim |\mathrm{dist}(\eta_j,\partial \Omega)|$ and $|\alpha_{j k}|^{2m_k}\lesssim|\mathrm{dist}(\eta_j,\partial \Omega)|$ for every $1\leq k\leq n$ (cf. \cite{NN20}). Here and in what follows, $\mathrm{dist}(z,\partial\Omega)$ denotes the Euclidean distance from $z$ to $\partial\Omega$.

 The following definition gives us a type of $\Lambda$-tangential convergence. 
\begin{define}[\cite{NNN25}]\label{lambda-tangent}
We say that a sequence $\{\eta_j=(\alpha_j,\beta_j)\}\subset  \Omega$ with $\alpha_j=(\alpha_{j 1},\ldots,\alpha_{j n})$, \emph{converges uniformly $\Lambda$-tangentially to $\xi_0$} if the following conditions hold:
\begin{itemize}
\item[(a)] $|\mathrm{Im}(\beta_j)|\lesssim |\mathrm{dist}(\eta_j,\partial \Omega)|$;
\item[(b)] $|\mathrm{dist}(\eta_j,\partial \Omega)|=o(|\alpha_{jk}|^{2m_k})$ for $1\leq k\leq n$;
\item[(c)] $|\alpha_{j1}|^{2m_1}\approx |\alpha_{j2}|^{2m_2}\approx \cdots\approx |\alpha_{jn}|^{2m_n}$.
\end{itemize}
\end{define}
\begin{remark} 
In the case when the point $\xi_0$ is strongly pseudoconvex, as in Theorem \ref{maintheorem1}, condition (a) is not necessary. However, this condition (a) is necessary due to technical restrictions, such as the scaling method employed in the proofs of Theorem \ref{maintheorem2} and Theorem \ref{maintheorem3} in this section and the next section, respectively.
\end{remark}

Now let us denote by $\displaystyle \sigma(z):=\sum_{k=1}^n |z_k|^{2m_k}$ and recall the following definition.
\begin{define}[\cite{NNN25}]\label{strongly-h-extendible} We say that a boundary point $\xi_0\in \partial \Omega$ is \emph{strongly $h$-extendible} if there exists $\delta>0$ such that $P(z)-\delta \sigma(z)$ is plurisubharmonic, i.e. $dd^c P\geq \delta dd^c \sigma$. 
\end{define}
\begin{remark}\label{remark-strongly-h-extendible} 
Since $dd^c P\gtrsim  dd^c \sigma$, it follows that
\begin{align*}
\sum_{k, l=1}^n \frac{\partial^2P}{\partial z_k\partial \bar z_l} (\alpha) w_j\bar w_l&\gtrsim \sum_{k, l=1}^n \frac{\partial^2\sigma}{\partial z_k\partial \bar z_l} (\alpha) w_j\bar w_l\\
&\gtrsim m_1^2|\alpha_1|^{2m_1-2}|w_1|^2+\cdots+m_n^2|\alpha_n|^{2m_n-2}|w_n|^2
\end{align*}
for all $\alpha,w\in \mathbb C^n$. Consequently, $P$ is strictly plurisubharmonic away from the union of all coordinates axes, i.e. $M_P$ is \emph{homogeneous finite diagonal type} in the sense of  \cite{He92, He16} (or $M_P$ is a \emph{$WB$-domain} in the sense of  \cite{AGK16}).
\end{remark}
\begin{example} \label{ball} Let $\mathcal {E}_{1,2,3}$ be the domain in $\mathbb C^{n+1}$ defined by
$$
\mathcal {E}_{1,2,3}:=\left\{(z_1,z_2,w)\in \mathbb C^{3}\colon \rho(z,w):=\mathrm{Re}(w)+ |z_1|^4+|z_2|^6 <0\right\}.
$$
We note that $\mathcal {E}_{1,2,3}$ is biholomorphically equivalent to the ellipsoid 
$$
\mathcal{D}_{{1,2,3}}:=\left\{(z_1,z_2,w)\in \mathbb C^{3}\colon |w|^2+ |z_1|^4+|z_2|^6<1\right\}
$$
 (cf. \cite{BP95, NNTK19}). Moreover, since $P(z_1,z_2)=|z_1|^4+|z_2|^6=\sigma(z_1,z_2)$ it is obvious that the boundary point $(0,0,0)\in \partial \mathcal{E}_{1,2,3}$ is strongly $h$-extendible.

 Now let us define a sequence $\{\eta_j\}\subset \mathcal {E}_{1,2,3}$ by setting $\eta_j=\big(1/j^{1/4}, 1/j^{1/6},-2/j-1/j^2\big)$ for every $j\in \mathbb N_{\geq 1}$. Then $\rho(\eta_j)=-1/j^2\approx -\mathrm{dist}(\eta_j,\partial\mathcal {E}_{1,2,3})$, and thus $\mathrm{dist}(\eta_j,\partial \mathcal{E}_{1,2,3})=o(\Big|\dfrac{1}{j^{1/4}}\Big|^4)=o(\Big|\dfrac{1}{j^{1/6}}\Big|^6)$. Hence, the sequence $\{\eta_j\}\subset  \mathcal{E}_{1,2,3}$ converges uniformly $\big(\dfrac{1}{4},\dfrac{1}{6}\big)$-tangentially to $(0,0,0)\in \partial \mathcal{E}_{1,2,3}$. \hfill $\Box$
\end{example}

In the sequel, we will assume that $\xi_0\in \partial\Omega$ is a strongly $h$-extendible point and let $\{\epsilon_j\}\subset \mathbb R^+$ be a given sequence. Then we define the sequence $\tau_j=(\tau_{j1},\ldots,\tau_{jn})$, associated to $\{\epsilon_j\}$, as follows:
\begin{equation*}
\tau_{jk}:=|\alpha_{jk}|.\Big(\dfrac{\epsilon_j}{|\alpha_{jk}|^{2m_k}}\Big)^{1/2},\; \forall j\geq 1, 1\leq k\leq n.
\end{equation*}
A simple calculation shows that $\tau_{jk}^{2m_k}=\epsilon_j.\Big(\frac{\epsilon_j}{|\alpha_{jk}|^{2m_k}}\Big)^{m_k-1}\lesssim \epsilon_j$. Hence, we get the following estimates
\begin{align}\label{tau-estimate}
\epsilon_j^{1/2}\lesssim \tau_{jk}\lesssim \epsilon_j^{1/2m_k}.
\end{align}

In order to prove Theorem \ref{maintheorem2}, we recall the following lemma (see a proof in \cite{NNN25}). 
 \begin{lemma}[\cite{NNN25}]\label{Cn-spherical-convergence} If $P(z)-\delta \sigma(z)$ is plurisubharmonic for some $\delta>0$, then
\begin{align*}
\epsilon_j^{-1}\sum_{k, l=1}^n \frac{\partial^2P}{\partial z_k\partial \bar z_l} (\alpha_j)\tau_{jk}\tau_{jl} w_k\bar w_l\gtrsim m_1^2|w_1|^2+\cdots+m_n^2|w_n|^2.
\end{align*}
\end{lemma}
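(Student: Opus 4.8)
The plan is to reduce everything to the pointwise Hermitian inequality already recorded in Remark \ref{remark-strongly-h-extendible} and then to absorb the dilation factors $\tau_{jk}$ by a direct computation. The entire argument rests on a single algebraic cancellation built into the definition of $\tau_{jk}$, so no genuine analytic difficulty arises.

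First I would unpack the strong $h$-extendibility hypothesis. The assumption that $P(z)-\delta\sigma(z)$ is plurisubharmonic means $dd^c P\geq \delta\, dd^c\sigma$, and since $\sigma(z)=\sum_{k=1}^n|z_k|^{2m_k}$ has diagonal complex Hessian $\frac{\partial^2\sigma}{\partial z_k\partial\bar z_l}(\alpha)=\delta_{kl}\,m_k^2|\alpha_k|^{2m_k-2}$, this yields for every $\alpha, v\in\mathbb C^n$ the inequality
\begin{equation*}
\sum_{k, l=1}^n \frac{\partial^2P}{\partial z_k\partial \bar z_l}(\alpha)\,v_k\bar v_l\;\geq\;\delta\sum_{k=1}^n m_k^2|\alpha_k|^{2m_k-2}|v_k|^2,
\end{equation*}
which is exactly the estimate displayed in Remark \ref{remark-strongly-h-extendible}.

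Next I would specialize $\alpha=\alpha_j$ and substitute $v_k=\tau_{jk}w_k$ into this inequality, producing
\begin{equation*}
\sum_{k, l=1}^n \frac{\partial^2P}{\partial z_k\partial \bar z_l}(\alpha_j)\,\tau_{jk}\tau_{jl}w_k\bar w_l\;\geq\;\delta\sum_{k=1}^n m_k^2|\alpha_{jk}|^{2m_k-2}\tau_{jk}^2|w_k|^2.
\end{equation*}
The key point is now the identity $|\alpha_{jk}|^{2m_k-2}\tau_{jk}^2=\epsilon_j$ for every $k$: indeed, from $\tau_{jk}=|\alpha_{jk}|\big(\epsilon_j/|\alpha_{jk}|^{2m_k}\big)^{1/2}$ one reads off $\tau_{jk}^2=\epsilon_j|\alpha_{jk}|^{2-2m_k}$, and multiplying by $|\alpha_{jk}|^{2m_k-2}$ makes the powers of $|\alpha_{jk}|$ cancel exactly. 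Substituting this and dividing by $\epsilon_j>0$ gives
\begin{equation*}
\epsilon_j^{-1}\sum_{k, l=1}^n \frac{\partial^2P}{\partial z_k\partial \bar z_l}(\alpha_j)\,\tau_{jk}\tau_{jl}w_k\bar w_l\;\geq\;\delta\big(m_1^2|w_1|^2+\cdots+m_n^2|w_n|^2\big),
\end{equation*}
which is the assertion, the implicit constant in $\gtrsim$ being precisely $\delta$.

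The only thing worth emphasizing is that the resulting bound is \emph{uniform in $j$}: the constant $\delta$ comes from the fixed strongly $h$-extendible structure of $P$ and is independent of the sequence $\{\alpha_j\}$ and of $\{\epsilon_j\}$. This uniformity is exactly what makes the estimate usable in the scaling argument for Theorem \ref{maintheorem2}. There is no real obstacle in the proof; the whole content is the cancellation $|\alpha_{jk}|^{2m_k-2}\tau_{jk}^2=\epsilon_j$, which is the reason the anisotropic dilation weights $\tau_{jk}$ were defined as they were.
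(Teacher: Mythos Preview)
Your argument is correct. The paper does not actually supply its own proof of this lemma, deferring instead to \cite{NNN25}; but it places Remark~\ref{remark-strongly-h-extendible} immediately before the statement precisely to record the pointwise Hermitian inequality $\sum_{k,l}\frac{\partial^2 P}{\partial z_k\partial\bar z_l}(\alpha)v_k\bar v_l\gtrsim\sum_k m_k^2|\alpha_k|^{2m_k-2}|v_k|^2$, which is the only analytic input needed. Your proof is exactly the intended one: specialize $\alpha=\alpha_j$, substitute $v_k=\tau_{jk}w_k$, and use the built-in cancellation $|\alpha_{jk}|^{2m_k-2}\tau_{jk}^2=\epsilon_j$.
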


\subsection{Proof of Theorem \ref{maintheorem2}}
Let $\Omega$ and $\xi_0\in \partial \Omega $ be as in the statement of Theorem \ref{maintheorem2}. Let $\mathcal{M}(\xi_0)=(2m_1,\ldots,2m_n,1)$ be the finite multitype of $\Omega$ at $\xi_0$ and denote by $\Lambda=(1/2m_1,\ldots,1/2m_n)$. As in Subsection \ref{Ss3.1}, one can find local coordinates $(z, w)=(z_1,\ldots,z_n,w)$ near $\xi_0$ such that $\xi_0=(0',0)$ and the local defining function $\rho(z,w)$ for $\Omega$ can be expanded near $(0',0)$ as follows:
$$
\rho(z,w)=\mathrm{Re}(w)+ P(z) +R_1(z) + R_2(\mathrm{Im} w)+(\mathrm{Im} w) R(z),
$$ 
 where $P$ is a $\Lambda$-homogeneous plurisubharmonic polynomial that contains no pluriharmonic monomials, $R_1\in \mathcal{O}(1, \Lambda),R\in \mathcal{O}(1/2, \Lambda) $, and $R_2\in \mathcal{O}(2)$.

By hypothesis of Theorem \ref{maintheorem2}, the sequence $\{\eta_j\}$ converges uniformly $\Lambda$-tangentially to $\xi_0$. If we write $\eta_j=(\alpha_j,\beta_j)=(\alpha_{j1},\ldots,\alpha_{jn},\beta_j)$, then we have
\begin{itemize}
\item[(a)] $|\mathrm{Im}(\beta_j)|\lesssim |\mathrm{dist}(\eta_j,\partial \Omega)|$;
\item[(b)] $|\mathrm{dist}(\eta_j,\partial \Omega)|=o(|\alpha_{jk}|^{2m_k})$ for $1\leq k\leq n$;
\item[(c)] $|\alpha_{j1}|^{2m_1}\approx |\alpha_{j2}|^{2m_2}\approx \cdots\approx |\alpha_{jn}|^{2m_n}$.
\end{itemize}
Let us fix a small neighborhood $U_0$ of  the origin. Then, without loss of generality we may assume that $\{\eta_j=(\alpha_j,\beta_j)\}\subset U_0^-:=U_0\cap\{\rho<0\}$ and one associates with a sequence of points $\eta_j'=(\alpha_{j}, a_j +\epsilon_j+i b_j)$, where $\epsilon_j>0$ and $\beta_j=a_j+i b_j$, such that $\eta_j'=(\alpha_j,\beta_j')$ with $\beta_j'= a_j +\epsilon_j+i b_j$ is in the hypersurface $\{\rho=0\}$ for every $j\in\mathbb N_{\geq 1}$. Let us note that $\epsilon_j\approx \mathrm{dist}(\eta_j,\partial \Omega)$ .

We now proceed with the scaling method. To do this, as in the proof of Theorem $1.1$ in \cite{NNN25} we make several changes of coordinates as follows. We first define the sequences of  translations $L_{\eta_j'}\colon \mathbb C^{n+1}\to\mathbb C^{n+1}$, defined by
$$
(\tilde z,\tilde w)=L_{\eta_j'}(z,w):=(z,w)-\eta_j'=(z-\alpha_j,w-\beta_j'),
$$
and we consider the sequence $\{Q_j\}$ of  automorphisms of $\mathbb C^{n+1}$, given by 
\[
\begin{cases}
w:= \tilde{w}+(R_2'(b_j) + R(\alpha_j)) i\tilde w+2\sum\limits_{1\leq |p|\leq 2} \frac{D^pP}{p!} (\alpha_j)(\tilde z)^p+2\sum\limits_{1\leq |p|\leq 2} \frac{D^p R_1}{p!}(\alpha_j)  (\tilde z)^p\\
\hskip 1cm+b_j\sum\limits_{1\leq |p|\leq 2} \frac{D^p R}{p!}(\alpha_j)(\tilde z)^p ;\\
z_k:=\tilde{z}_k,\, k=1,\ldots,n.
\end{cases}
\]

We finally define an anisotropic dilation $\Delta_j\colon  \mathbb C^{n+1}\to\mathbb C^{n+1}$ by settings:
\begin{equation*}\label{dilationj}
\Delta_j(z,w):=\Delta_{\eta_j}^{\epsilon_j} (z_1,\ldots,z_n, w)=\Big(\frac{z_1}{\tau_{j1}},\ldots,\frac{z_n}{\tau_{jn}}, \frac{w}{\epsilon_j}\Big),
\end{equation*}
where 
\begin{equation*}
\tau_{jk}:=|\alpha_{jk}|.\Big(\dfrac{\epsilon_j}{|\alpha_{jk}|^{2m_k}}\Big)^{1/2},\; 1\leq k\leq n.
\end{equation*}

As a result, the composition $T_j:=\Delta_j\circ Q_j\circ L_{\eta_j'}\in \mathrm{Aut}(\mathbb C^{n+1})$ satisfies that $T_j(\eta_j')=(0',0)$ and $T_j(\eta_j)=(0',-1-i(R_2'(b_j) + R(\alpha_j)))\to (0',-1)$ as $j\to\infty$. Moreover, the hypersurface $T_j(\{\rho=0\}) $ is now defined by an equation of the form
\begin{align}\label{taylor-defining-function}
\begin{split}
&\epsilon_j^{-1}\rho\left (T_j^{-1}(\tilde z,\tilde w)\right)\\
&= \mathrm{Re} (\tilde w)+\epsilon_j^{-1}o(\epsilon_j|\mathrm{Im}(\tilde w)|)+\frac{1}{2}\sum_{k,l=1}^n \frac{\partial^2 P}{\partial \tilde z_k\partial\overline{\tilde z_l}}(\alpha_j) \epsilon_j^{-1}\tau_{jk}\tau_{jl} \tilde z_k\overline{\tilde z_l}\\
&+\frac{1}{2}\sum_{k,l=1}^n \frac{\partial^2 R_1}{\partial \tilde z_k\partial\overline{\tilde z_l}}(\alpha_j) \epsilon_j^{-1}\tau_{jk}\tau_{jl} \tilde z_k\overline{\tilde z_l}+ \frac{\epsilon_j^{-1}b_j}{2}\sum_{k,l=1}^n \frac{\partial^2 R}{\partial \tilde z_k\partial\overline{\tilde z_l}}(\alpha_j) \tau_{jk}\tau_{jl} \tilde z_k\overline{\tilde z_l}+\cdots=0,
\end{split}
\end{align}
where the dots denote remainder terms. 

Thanks to the fact that $\{\eta_j\}$ converges uniformly $\Lambda$-tangentially to  $\xi_0=(0',0)$, the authors \cite{NNN25} proved that, after taking a subsequence if necessary, the sequence of defining functions given in (\ref{taylor-defining-function}) converges uniformly on compacta of $\mathbb C^{n+1}$ to $\hat\rho(\tilde z,\tilde w):=\mathrm{Re}(\tilde w)+H(\tilde z)$, where
$$
H(\tilde z)=\sum_{k,l=1}^n a_{kl}  \tilde z_k\overline{\tilde z_l}
$$
with coefficients $a_{kl}$ given by
$$
a_{kl}:=\frac{1}{2}\lim_{j\to\infty}\frac{\partial^2 P}{\partial \tilde z_k\partial\overline{\tilde z_l}}(\alpha_j) \epsilon_j^{-1}\tau_{jk}\tau_{jl}, 1\leq k,l\leq n.
$$

One notes that $M_H$ is also the limit of a sequence of the pseudoconvex domains $T_j(\Omega \cap U_0) $. Hence, $M_H$ is also pseudoconvex, and thus $H$ is plurisubharmonic. In addition, it follows directly from Lemma \ref{Cn-spherical-convergence} that $H$ is positive definite. Therefore, there exits a biholomorphism $\Theta \colon M_{H}\to \mathcal{U}_{n+1}$, where $\mathcal{U}_{n+1}$ is the Siegel half-space, given by 
$$
\mathcal{U}_{n+1}:=\{(z,w)\in \mathbb{C}^{n+1} \colon\mathrm{Re}(w) +|z_1|^2+|z_2|^2+\cdots+|z_n|^2<0\}.
$$
It is important to note that the map $\Theta$ is chosen as a composition of a dilation and a unitary transformation (in the variables $(\tilde z_l,\ldots, \tilde z_n)$) that diagonalizes $H(\tilde z)$ (see the proof of Prop. 2 in \cite{Gra75}). In addition,  the holomorphic map $\Psi $ defined by
\[
(z,w)\mapsto  \Big( \frac{2z_1}{1-w},\ldots, \frac{2z_n}{1-w},  \frac{w+1}{1-w}\Big),
\]
is a biholomorphism from $\mathcal{U}_{n+1}$ onto $\mathbb B^{n+1}$.

Now let us consider the sequence of biholomorphic maps $f_j:=\Psi\circ \Theta\circ\Delta_j\circ Q_j\circ L_{\eta_j'} \colon \Omega \to  f_j(\Omega)=\Psi\circ \Theta(\Omega_j)$. Then the sequences $f_j(\Omega \cap U_0)$ and $f_j(\partial \Omega \cap U_0)$ converge normally to $\mathbb{B}^{n+1}$ and $\partial \mathbb{B}^{n+1}$, respectively. Let us denote $\Omega_j:=\Theta\circ\Delta_j\circ Q_j\circ L_{\eta_j'}(\Omega \cap U_0)$ for all $j\in \mathbb{N}_{\geq 1}$. Thus, for any $(z,w)\in U_0\cap \Omega$, the point $(z',w'):= \Theta\circ\Delta_j\circ Q_j\circ L_{\eta_j'}(z,w)\in \Omega_j$ satisfies
\[
\mathrm{Re}(w')\gtrsim -\dfrac{1}{\epsilon_j}; \quad |z_k'|\lesssim \frac{1}{\tau_{jk}},\quad 1\leq k\leq n.
\]
As in the proof of Theorem \ref{maintheorem1}, by \cite[Lemma 3.2 and Lemma 3.3]{NNN25} and the equation \eqref{taylor-defining-function}, shrinking $U_0$ if necessary we may assume that
$$
|z'|^2 \lesssim |\mathrm{Re}(w')|
$$ 
for all $(z',w')\in\Omega_j$, where the constant is independent of $j$. Moreover, for $(\tilde z,\tilde w)=\Psi(z',w')$ with $(z',w')\in \Omega_j$, we have the estimates
\begin{align*}
|\tilde w+1|=\Big|\dfrac{2}{1-w'}\Big|\lesssim \dfrac{1}{|\mathrm{Re}(w')|},\quad |\tilde z_k|=\Big|\frac{2z'_k}{1-w'}\Big|\lesssim \frac{\sqrt{|\mathrm{Re}(w')|}}{|\mathrm{Re}(w')|-1}\lesssim \frac{1}{\sqrt{|\mathrm{Re}(w')|}}
\end{align*}
for $1\leq k\leq n$. Therefore, for any sufficiently small $\epsilon > 0$, there exist $M>0$ and $j_0 \in \mathbb{N}_{\geq 1}$ such that the image under $\Psi$ of the domain
\[
\Big\{(z',w')\in \Omega_j\colon \mathrm{Re}(w')< -M\Big\}
\]
is contained in $B\big((0',-1), \epsilon/2\big)$ for all $j\geq j_0$. In addition, the sequence $\big\{(z',w')\in \Omega_j\colon \mathrm{Re}(w')\geq -M\big\}$ converges to the set $\big\{(z',w')\in \mathcal{U}_{n+1}\colon \mathrm{Re}(w')\geq -M\big\}$ in the Hausdorff sense. Furthermore, since $\Theta(0',-1) =(0',-1)$ and $\Psi(0',-1)=(0',0)$, we have 
\[
f_j(\eta_j) = \Psi\circ \Theta(0', -1 - i(R_2'(b_j) + R(\alpha_j))) \to (0', 0) \quad \text{as } j \to \infty.
\]
This yields that for any sufficiently small $\epsilon > 0$, there exists $j_0 \in \mathbb{N}_{\geq 1}$ such that
\[
B\big((0',0), 1 - \epsilon\big) \subset F_j(U_0\cap \Omega) \subset B\big((0',0), 1 + \epsilon\big), \quad \forall j \geq j_0,
\]
where $F_j(\cdot) := f_j(\cdot) - f_j(\eta_j)$ for all $j \geq j_0$. Since $F_j(\eta_j) = 0$, it follows that
\[
\sigma_{\Omega\cap U_0}(\eta_j) \geq \frac{1 - \epsilon}{1 + \epsilon}, \quad \forall j \geq j_0.
\]
Since $\epsilon>0$ is arbitrary, we conclude that $\displaystyle \lim_{j\to \infty}\sigma_{\Omega\cap U_0}(\eta_j)=1$, and thus the proof of Theorem~\ref{maintheorem2} is complete.
\hfill $\Box$

\begin{example} \label{ex3.1}  Denote by $E_{1,2,4}$ the domain in $\mathbb C^3$, given by
$$
E_{1,2,4}:=\{(z_1,z_2, w)\in \mathbb C^3\colon \mathrm{Re}(w)+|z_1|^4+|z_1|^2|z_2|^4+|z_2|^8<0\}.
$$
Denote by $P(z)=|z_1|^4+|z_1|^2|z_2|^4+|z_2|^8$ and $\sigma(z)=|z_1|^4+|z_2|^8$. Then a computation shows that 
\begin{align*}
dd^c P(z)&=(4|z_1|^2+|z_2|^4) d z_1 d\bar z_1+2 \bar z_1 z_2 |z_2|^2d z_1 d\bar z_2+ 2z_1\bar z_2|z_2|^2 d \bar z_1 d z_2\\
&+(16|z_2|^6+4|z_1|^2|z_2|^2) d z_2 d\bar z_2\\
&=4|z_1|^2 d z_1 d\bar z_1+16|z_2|^6 d z_2 d\bar z_2+ |z_2|^2 |z_2  d\bar z_1+2 \bar z_1 d z_2|^2\\
&\geq dd^c\sigma(z).
\end{align*}
Therefore, the origin is strongly $h$-extendible with multitype $(4,8,1)$ and thus the weight $\Lambda$ is now given by $\displaystyle \Lambda:=(\frac{1}{4},\frac{1}{8})$.

Although $E_{1,2,4}$ is unbounded, but it is biholomorphically equivalent to the bounded domain
$$
\big\{(z_1,z_2, w)\in \mathbb C^3\colon |w|^2+|z_1|^4+|z_1|^2|z_2|^4+|z_2|^8<1\big\}
$$
via the following biholomorphism 
\[
(z_1, z_2,w)\mapsto \Big( \frac{z_1}{(1+w)^{1/2}},  \frac{z_2}{(1+w)^{1/4}},  \frac{w-1}{1+w}\Big).
\]
Therefore, the squeezing function of $E_{1,2,4}$, denoted by $\sigma_{E_{1,2,4}}$, is well-defined. \hfill$\Box$
\end{example}

To complete this section, we shall prove the following proposition using a variant of the scaling method.
\begin{proposition}\label{pro2} Let $\Omega$ be a bounded domain in $\mathbb C^3$ and $(0,0,0)\in \partial\Omega$. Suppose that the defining function $\rho$ for $\Omega$ near  $(0,0,0)$ given by
$$
\rho(z_1,z_2,w)=\mathrm{Re}(w)+|z_1|^4+|z_1|^2|z_2|^4+|z_2|^8.
$$
Then we have
$$
\liminf_{j\to\infty}\sigma_{\Omega\cap U_0}(\eta_j)>0,
$$
where $\displaystyle\eta_j=\big(\frac{1}{j^{1/4}},\frac{1}{j^{3/8}},-\frac{1}{j}-\frac{2}{j^2}-\frac{1}{j^3}\big)\in\Omega, \; \forall j\in \mathbb N_{\geq 1}$, and $U_0$ is a sufficiently small neighborhood of $(0,0,0)$.

\end{proposition}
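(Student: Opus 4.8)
The plan is to run the scaling scheme of the proof of Theorem \ref{maintheorem2}, but with an anisotropic dilation adapted to the actual location of $\eta_j$ rather than to the multitype, since the sequence fails condition (b) in the $z_2$-variable and so is \emph{not} uniformly $\Lambda$-tangential. Write $\alpha_{j1}=j^{-1/4}$, $\alpha_{j2}=j^{-3/8}$, fix a small neighborhood $U_0$ of the origin, let $\eta_j'$ be the projection of $\eta_j$ onto $\partial\Omega$ along $\mathrm{Re}(w)$, and put $\epsilon_j=\mathrm{dist}(\eta_j,\partial\Omega)$; the hypothesis $\eta_j\in\Omega$ guarantees $\epsilon_j>0$, and for $R\equiv 0$ one computes $\rho(\eta_j)=-1/j^2$, whence $\epsilon_j\approx 1/j^2$. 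First I would translate $\eta_j'$ to the origin by $L_{\eta_j'}$ and apply a polynomial automorphism $Q_j$, exactly as in Theorem \ref{maintheorem2}, removing the pluriharmonic part of the Taylor expansion of $P+R$ at $\alpha_j$, so that after $Q_j\circ L_{\eta_j'}$ the defining function takes the form $\mathrm{Re}(w)+(\text{Hermitian part})+(\text{higher order})$.

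The decisive step is the dilation. Since $\alpha_j$ lies off the coordinate axes, $P$ is strictly plurisubharmonic there (Remark \ref{remark-strongly-h-extendible}), and the second-order behaviour of $\rho$ at $\eta_j'$ in the $z_2$-direction is controlled not by $|z_2|^8$ but by the mixed term $|z_1|^2|z_2|^4$. I would therefore take $\Delta_j(z,w)=(z_1/\tau_{j1},z_2/\tau_{j2},w/\epsilon_j)$ with $\tau_{j1}\approx j^{-3/4}$ and $\tau_{j2}\approx|\alpha_{j2}|=j^{-3/8}$, i.e. the scales that normalize the full Hermitian form of $\rho$ at $\eta_j'$ to unit size. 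Because $\tau_{j2}\approx|\alpha_{j2}|$, one has $z_2=|\alpha_{j2}|(1+\hat z_2)$, so the dilation does \emph{not} contract to a point in $z_2$; consequently $|z_1|^4$ contributes $4|\hat z_1|^2$, the term $|z_2|^8$ contributes nothing, and, after discarding pluriharmonic terms, $|z_1|^2|z_2|^4$ survives as $|(1+\hat z_2)^2-1|^2=|2\hat z_2+\hat z_2^2|^2$. With $T_j:=\Delta_j\circ Q_j\circ L_{\eta_j'}$ I expect to prove that $T_j(\Omega)$ converges normally to
\[
M=\{(\hat z_1,\hat z_2,\hat w)\in\mathbb C^3\colon \mathrm{Re}(\hat w)+4|\hat z_1|^2+|2\hat z_2+\hat z_2^2|^2<0\},
\]
with $T_j(\eta_j)\to(0',-1)$.

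Granting this, I would conclude as in Theorems \ref{maintheorem1}--\ref{maintheorem2}. The function $\psi(\hat z):=4|\hat z_1|^2+|g(\hat z_2)|^2$, with $g(\hat z_2):=2\hat z_2+\hat z_2^2$, is a non-negative plurisubharmonic exhaustion, $M$ contains no complex line, and $M$ is strongly pseudoconvex at the base point $(0',-1)$ since $g'(0)\neq 0$. Following the proof of Theorem \ref{maintheorem2}, let $\Theta$ be the linear normalization of the Hermitian part and $\Psi$ the Cayley map; using $\psi\geq 4|\hat z_1|^2$ together with $|g(\hat z_2)|\to\infty$ as $\hat z_2\to\infty$, one checks directly that $D':=\Psi\circ\Theta(M)$ is a \emph{bounded} domain with $\Psi\circ\Theta(0',-1)=(0',0)$ an interior point, so there exist radii $0<r\leq R<\infty$ with $B((0',0),r)\subset D'\subset B((0',0),R)$. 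Normalizing the embeddings $\Psi\circ\Theta\circ T_j$ (recentred at $T_j(\eta_j)$ and rescaled by $1/R$) into $\mathbb B^3$ and invoking the normal convergence, one obtains $\sigma_\Omega(\eta_j)\geq r/R-o(1)$, hence $\liminf_j\sigma_\Omega(\eta_j)\geq r/R>0$.

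The main obstacle is the normal-convergence step and, inside it, the control of the remainder $R$. Because $\tau_{j2}\approx|\alpha_{j2}|$ is far larger than the type-based scale $\epsilon_j^{1/8}$, the weighted-homogeneity estimates of Theorem \ref{maintheorem2} no longer make the error terms automatically negligible: for instance a monomial $|z_1|^4|z_2|^2$ of $R$ has weighted degree $5/4>1$, yet at $\alpha_j$ its Hessian in $z_2$ dominates that of $|z_1|^2|z_2|^4$. The remedy is to let $\tau_{jk}$ normalize the Hermitian form of the \emph{full} defining function $\rho$ (not merely of $P$) at the strongly pseudoconvex point $\eta_j'$; then, using the pseudoconvexity of $\Omega$ and the hypothesis $\eta_j\in\Omega$ (which forces $R(\alpha_j)<\epsilon_j$), each term of $R$ contributes in the limit at most a non-negative plurisubharmonic perturbation, and the limit model is again of the form $\{\mathrm{Re}(\hat w)+\psi(\hat z)<0\}$ with $\psi\geq 0$ plurisubharmonic, exhausting and free of complex lines. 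For any such model the boundedness argument of the previous paragraph applies verbatim, keeping the limiting squeezing value positive; in the extreme case where the Hessian of $R$ dominates one expects the model to degenerate to the Siegel domain, giving $\sigma_\Omega(\eta_j)\to 1$, and otherwise a value in $(0,1)$.
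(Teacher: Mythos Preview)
Your scaling is exactly the paper's: the same anisotropic scales $\tau_{j1}\approx j^{-3/4}$ and $\tau_{j2}\approx|\alpha_{j2}|=j^{-3/8}$, and the same normal limit up to removal of the pluriharmonic terms in $\hat z_2$. The paper keeps those terms and obtains
\[
M_{1,2}=\bigl\{(\tilde z_1,\tilde z_2,\tilde w):\ \mathrm{Re}(\tilde w)+|\tilde z_1|^2+\bigl(|\tilde z_2+1|^4-1\bigr)<0\bigr\};
\]
your $M$ is the same domain after subtracting the harmonic part $\mathrm{Re}(4\tilde z_2+2\tilde z_2^2)$ from the defining function (and a harmless rescaling in $\hat z_1$), since $|\tilde z_2+1|^4-1=|2\tilde z_2+\tilde z_2^2|^2+4\,\mathrm{Re}(\tilde z_2)+2\,\mathrm{Re}(\tilde z_2^2)$.

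The real difference is in the endgame. Where you argue, somewhat indirectly, that $\Psi\circ\Theta(M)$ is bounded, the paper notices that the affine shift $\tilde z_2\mapsto z_2-1$, $\tilde w\mapsto w+1$ sends $M_{1,2}$ onto the elementary model $\mathcal E_{1,1,2}=\{\mathrm{Re}(w)+|z_1|^2+|z_2|^4<0\}$, which is biholomorphic to the \emph{bounded} ellipsoid $\mathcal D_{1,1,2}=\{|w|^2+|z_1|^2+|z_2|^4<1\}$ via the explicit Cayley-type map $(z_1,z_2,w)\mapsto\bigl(\tfrac{2z_1}{1-w},\sqrt{\tfrac{2}{1-w}}\,z_2,\tfrac{w+1}{1-w}\bigr)$; the squeezing bound then drops out from the interior position of the image of $(0,0,-1)$ in $\mathcal D_{1,1,2}$. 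Your direct boundedness argument for $\Psi\circ\Theta(M)$ can be made to work (one has to note that $g(\hat z_2)=2\hat z_2+\hat z_2^2$ vanishes only at $\hat z_2=0,-2$, so $|\hat z_2|/(1+|g(\hat z_2)|^2)$ stays bounded), but the translation to $\mathcal E_{1,1,2}$ makes both the model and the final inequality completely explicit and avoids any ``no complex line / exhaustion'' discussion.

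Your worry about the remainder $R$ is legitimate: with $\tau_{j2}\approx|\alpha_{j2}|$ the weighted-order bound $R\in\mathcal O(1,\Lambda)$ no longer forces the rescaled $R$ to vanish (e.g.\ a monomial $|z_1|^4|z_2|^2$ has weighted degree $5/4$ but, under this scaling, contributes $j^{1/4}|\tilde z_2|^2$). The paper's own proof also carries out the computation only for $R\equiv 0$. Your proposed remedy---letting the $\tau_{jk}$ normalise the Hessian of the full $\rho$ at $\eta_j'$---is a reasonable heuristic, but the sentence ``each term of $R$ contributes in the limit at most a non-negative plurisubharmonic perturbation'' is asserted, not proved; a term such as $c|z_1|^4|z_2|^2$ would genuinely change the limit model, and the resulting domain is not automatically of the bounded type you describe. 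So on this point you and the paper are at the same level of rigour: the argument is complete for $R\equiv 0$ and only sketched beyond that.
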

\begin{proof}
As in Example \ref{ex3.1}, the origin is strongly $h$-extendible with multitype $(4,8,1)$ and thus we denote by $\displaystyle \Lambda:=(\frac{1}{4},\frac{1}{8})$.
Now we consider the sequence $\big\{\eta_j:=(\frac{1}{j^{1/4}},\frac{1}{j^{3/8}},-\frac{1}{j}-\frac{2}{j^2}-\frac{1}{j^3})\big\}$ that converges $\Lambda$-tangentially but not uniformly to $(0,0,0)$.

Although we cannot apply the scaling method given in the proof of Theorem \ref{maintheorem2}, an alternative scaling can be introduced as follows. Indeed, let $\rho(z_1,z_2,w)=\mathrm{Re}(w)+|z_1|^4+|z_1|^2|z_2|^4+|z_2|^8$ and let $\eta_j=(\frac{1}{j^{1/4}},\frac{1}{j^{3/8}},-\frac{1}{j}-\frac{2}{j^2}-\frac{1}{j^3})$ for every $j\in \mathbb N_{\geq 1}$. Then $\eta_j=(\frac{1}{j^{1/4}},\frac{1}{j^{3/8}},-\frac{1}{j}-\frac{1}{j^2}-\frac{1}{j^3})\in \partial \Omega$ for every $j\in \mathbb N_{\geq 1}$. It is noted that $\rho(\eta_j)=-\frac{1}{j^2}\approx -\mathrm{dist}(\eta_j,\partial \Omega)$ and let us set
$\epsilon_j=|\rho(\eta_j)|=\frac{1}{j^2}$. 

We first consider a change of variables $(\tilde z,\tilde w):=L_j(z,w)$, i.e.,
\[
\begin{cases}
w=\tilde w;\\
\displaystyle z_1-\frac{1}{j^{1/4}}= \tilde{z}_1;\\
\displaystyle z_2-\frac{1}{j^{3/8}}=\tilde{z}_2.
\end{cases}
\]
Then, a direct calculation shows that
\begin{equation*}
\begin{split}
&\rho\circ L_j^{-1} (\tilde w,\tilde z_1,\tilde z_2)=
\mathrm{Re}(\tilde w)+ |\dfrac{1}{j^{1/4}}+\tilde z_1|^4+ |\dfrac{1}{j^{1/4}}+\tilde z_1|^2 |\frac{1}{j^{3/8}}+\tilde z_2|^4+|\frac{1}{j^{3/8}}+\tilde z_2|^8 \\
                      &= \mathrm{Re}(\tilde w)+\frac{1}{j}+\frac{4}{j^{3/4}}\mathrm{Re}(\tilde z_1) +\frac{2}{j^{1/2}}|\tilde z_1|^2+\frac{1}{j^{1/2}}(2\text{Re}(\tilde z_1))^2
+ \frac{4}{j^{1/4}} |\tilde z_1|^2 \mathrm{Re}(\tilde z_1)+ |\tilde z_1|^4\\
&+\Big( \frac{1}{j^{1/2}}+\dfrac{2}{j^{1/4}}\mathrm{Re}(\tilde z_1)+|\tilde z_1|^2\Big) \times\\
 &\Big(\frac{1}{j^{3/2}}+\frac{4}{j^{9/8}}\mathrm{Re}(\tilde z_2) +\frac{2}{j^{3/4}}|\tilde z_2|^2+\frac{1}{j^{3/4}}(2\text{Re}(\tilde z_2))^2
+ \frac{4}{j^{3/8}} |\tilde z_2|^2 \mathrm{Re}(\tilde z_2)+ |\tilde z_2|^4\Big)+|\frac{1}{j^{3/8}}+\tilde z_2|^8 .
\end{split}
\end{equation*}
To define an anisotropic dilation,  let us denote by
 $\tau_{1j}:=\tau_1(\eta_j)=\frac{1}{2 j^{3/4}}$ and $\tau_{2j}:=\tau_2(\eta_j)=\frac{1}{ j^{3/8}}$ for all $j\in \mathbb N_{\geq 1}$. Now we introduce a sequence of  polynomial automorphisms $\phi_{{\eta}_j}$ of $\mathbb C^3$ ($j\in \mathbb N_{\geq 1}$), given by
\begin{equation*}
\begin{split}
&\phi_{{\eta}_j} ^{-1}(\tilde z_1,\tilde z_2,\tilde w)\\
&= \Big (\dfrac{1}{j^{1/4}}+\tau_{1j} \tilde z_1, \,\dfrac{1}{j^{3/8}}+ \tau_{2j} \tilde z_2, \,-\frac{1}{j}-\frac{1}{j^2}-\frac{1}{j^3}+\epsilon_j \tilde w- \frac{4}{j^{3/4}}\tau_{1j} \tilde z_1-\frac{2}{j^{1/2}}(\tau_{1j})^2 \tilde z_1^2\Big).
\end{split}
\end{equation*}
Therefore, for each $j\in\mathbb N_{\geq 1}$ the hypersurface $\phi_{\eta_j}(\{\rho=0\}) $ is then defined by

\begin{equation*}
\begin{split}
&\epsilon_j^{-1}\rho\circ \phi_{{\eta}_j} ^{-1}(\tilde z_1,\tilde z_2,\tilde w)\\
&= \epsilon_j^{-1}\rho \Big (\dfrac{1}{j^{1/4}}+\tau_{1j} \tilde z_1, \,\dfrac{1}{j^{3/8}}+ \tau_{2j} \tilde z_2, \,-\frac{1}{j}-\frac{1}{j^2}-\frac{1}{j^3}+\epsilon_j \tilde w- \frac{4}{j^{3/4}}\tau_{1j} \tilde z_1-\frac{2}{j^{1/2}}(\tau_{1j})^2 \tilde z_1^2\Big)\\
&=\mathrm{Re}(\tilde w) +|\tilde z_1|^2+ \frac{1}{16j}|\tilde z_1|^4
+ \frac{1}{2j^{1/4}}|\tilde z_1|^2\mathrm{Re}(\tilde z_1)+ \left(|\tilde z_2+1|^4-1\right)+O(\frac{1}{j^{1/2}})=0.
\end{split}
\end{equation*} 
This yields that the sequence of domains $\Omega_j:=\phi_{{\eta}_j}(\Omega\cap U_0) $, where $U_0$ is a sufficiently small neighborhood of $(0,0,0)$, converges normally to the following model
$$
M_{1,2}:=\left \{(\tilde z_1,\tilde  z_2, \tilde  w)\in \mathbb C^3\colon \mathrm{Re}(\tilde  w)+|\tilde  z_1|^2+ \left(|\tilde z_2+1|^4-1\right)<0\right\}.
$$

Next, one observes that $\phi_{{\eta}_j}(\eta_j')=(0,0,0)\in \partial \Omega_j$ and $\phi_{{\eta}_j}(\eta_j)=(0,0,-1)\in \Omega_j$ for all $j\in \mathbb N_{\geq 1}$. Let us define a biholomorphic map $\Theta$, given by
\[ 
w= \tilde w-1,  z_1=\tilde z_1, z_2=\tilde z_2+1, 
\]
maps $M_{1,2}$ onto the following domain
$$
\mathcal{E}_{1,1,2}=\left \{(z_1,z_2, w)\in \mathbb C^3\colon \mathrm{Re}(w)+|z_1|^2+ |z_2|^4<0\right\}.
$$
Moreover, $\Theta\circ\phi_{{\eta}_j}(\eta_j')=(0,1,-1)\in \partial \mathcal{D}_{1,1,2}$ and $\Theta\circ\phi_{{\eta}_j}(\eta_j)=(0,1,-2)\in \mathcal{E}_{1,1,2}$.  In addition,  the holomorphic map $\Psi $ defined by
\[
(z_1,z_2,w)\mapsto \Big( \frac{2}{1-w}z_1,\sqrt{\frac{2}{1-w}} ~z_2,  \frac{w+1}{1-w}\Big),
\]
is a biholomorphism from $\mathcal{E}_{1,1,2}$ onto the ellipsoid
\[
\mathcal{D}_{1,1,2}=\left \{(z_1,z_2, w)\in \mathbb C^3\colon |w|^2+|z_1|^2+ |z_2|^4<1\right\}.
\]

Finally, let us consider the sequence of biholomorphic map $f_j:=\Psi\circ \Theta\circ  \Phi_{\eta_j} \colon \Omega \to  f_j(\Omega)$. Since $\Psi\circ\Theta\circ\phi_{{\eta}_j}(\eta_j')=\Psi(0,1,-1)=(0,1,0)\in \partial \mathcal{D}_{1,1,2}$ and $\Psi\circ\Theta\circ\phi_{{\eta}_j}(\eta_j)=\Psi(0,1,-2)=\Big(0,\sqrt{\dfrac{2}{3}}, -\dfrac{1}{2}\Big)\in \mathcal{D}_{1,1,2}$. Furthermore, we have $\Psi(z_1,z_2,w)\to  (0,0,-1)$ as $ \mathcal{E}_{1,1,2}\ni (z_1,z_2,w)\to \infty$. As in the proof of Theorem \ref{maintheorem2}, we conclude that $ f_j(\Omega\cap U_0) $ and $ f_j(\partial \Omega\cap U_0) $ converge to $\mathcal{D}_{1,1,2} $ and $\partial \mathcal{D}_{1,1,2}$ in the Hausdorff sense, respectively. Therefore, this implies that
\[
\sigma_{\Omega\cap U_0}(\eta_j)=\sigma_{f_j(\Omega\cap U_0)}\Big(0,\sqrt{2/3}, -1/2\Big)>\dfrac{1}{2} \dfrac{\mathrm{dist}\Big(\big(0,\sqrt{2/3}, -1/2\big),\partial\mathcal{D}_{1,1,2}\Big)}{\mathrm{diam}(\mathcal{D}_{1,1,2})}>0
\]
for any $j$ big enough, where $\mathrm{diam}(\mathcal{D}_{1,1,2})$ denotes the diameter of the domain $\mathcal{D}_{1,1,2}$. Hence, the proof is now complete.
\end{proof}
\begin{remark} Consider the domain $E_{1,2,4}$ and the sequence $\{\eta_j\} \subset E_{1,2,4}$ as in Proposition \ref{pro2}. Then we have $\sigma_{E_{1,2,4}}(\eta_j)\not \to 1$ as $j\to\infty$, contrary to Theorem \ref{maintheorem2}.  Indeed, suppose, for the sake of contradiction, that $\sigma_{E_{1,2,4}}(\eta_j) \to 1$ as $j\to\infty$, then by the argument as in the proof of \cite[Theorem $2.1$]{NN20} the unit ball $\mathbb B^{n+1}$ is biholomorphically equivalent to $\mathcal{D}_{1,1,2}=\left \{(z_1,z_2, w)\in \mathbb C^3\colon |w|^2+|z_1|^2+ |z_2|^4<1\right\}$. Therefore, we arrive at a contradiction, as  $\mathcal{D}_{1,1,2}$ is not homogeneous.
\end{remark}

\section{The boundary behavior of the squeezing function near a weakly pseudoconvex point in $\mathbb C^2$}\label{S5}
\subsection{The spherically tangential convergence}\label{sub-sphere}
Let $\Omega$ be a domain in $\mathbb C^2$ and $\xi_0\in \partial \Omega$. Assume that $\partial\Omega$ is $\mathcal{C}^\infty$-smooth and pseudoconvex of D'Angelo finite type near $\xi_0$. After a change of variables, there are the coordinate functions $(z,w)$ such that $\xi_0=(0,0)$ and  $\rho(z,w)$, the local defining function for $\Omega$ near $\xi_0$, can be expanded near $(0,0)$ as follows:
\begin{equation}\label{def-0}
\rho(z, w) =\mathrm{Re}(w) + H(z) +v\varphi(v, z)+ O(|z|^{2m+1}|), 
\end{equation}
where $H$ is a real homogeneous subharmonic polynomial of degree $2m$, where $2m$ is the D'Angelo type of $\partial \Omega$ at $\xi_0$, not identically zero and without harmonic terms and $\varphi$ is a $\mathcal{C}^\infty$-smooth function defined in a neighborhood of the origin in $\mathbb R\times \mathbb C$ with $\varphi(0,0)=0$. Since the type is invariant under local biholomorphism and coincides with the maximal contact order at $(0,0)$ of germs of holomorphic curves with $\partial \Omega$. The pseudoconvexity of $\partial \Omega$ is equivalent to the subharmonicity of $H$ and the type $2m$ of $\partial \Omega$ at $\xi_0$ is then necessarily even.

 We recall the following definition.
\begin{define}[\cite{NNN25}]\label{spherically-convergence}
We say that a sequence $\{\eta_j=(\alpha_j,\beta_j)\}\subset  \Omega$ \emph{converges spherically $\frac{1}{2m}$-tangentially to $\xi_0$} if
\begin{itemize}
\item[(a)] $|\mathrm{Im}(\beta_j)|\lesssim |\mathrm{dist}(\eta_j,\partial \Omega)|$;
\item[(b)] $|\mathrm{dist}(\eta_j,\partial \Omega)|=o(|\alpha_{j}|^{2m})$;
\item[(c)]  $\Delta H(\alpha_{j})\gtrsim |\alpha_{j}|^{2m-2}$.
\end{itemize}
\end{define}
\begin{remark} In the case when $\Omega$ is a smooth pseudoconvex domain in $\mathbb C^2$, the condition $\mathrm{(c)}$ simply says that $\Omega$ is strongly pseudoconvex at $\eta_j'$ for every $j\in\mathbb N_{\geq 1}$, where $\{\epsilon_j\}\subset \mathbb R^+$ is a sequence such that $\eta_j':=(\alpha_j,\beta_j+\epsilon_j)\in \partial \Omega$ for all $j\in\mathbb N_{\geq 1}$. If $\Omega$ is strongly $h$-extendible at $x_0$, i.e.  $\Delta H(z)\gtrsim |z|^{2m-2}$, any sequence $\{\eta_j\}\subset \Omega$ converges spherically $\frac{1}{2m}$-tangentially to $\xi_0$ provided conditions (a) and (b) are satisfied.
\end{remark}

\subsection{Proof of Theorem \ref{maintheorem3}} Let $\Omega$ and $\xi_0\in \partial \Omega $ be as in the statement of Theorem \ref{maintheorem3}. As in Subsection \ref{sub-sphere}, one can find the coordinate functions $(z,w)$ such that $\xi_0=(0,0)$ and  $\rho(z,w)$ can be described near $(0,0)$ as follows:
\begin{equation}\label{def-0}
\rho(z, w) =\mathrm{Re}(w) + H(z) +v\varphi(v, z)+ O(|z|^{2m+1}|), 
\end{equation}
where $H$ is a real homogeneous subharmonic polynomial of degree $2m$ without harmonic terms and $\varphi$ is a $\mathcal{C}^\infty$-smooth function defined in a neighborhood of the origin in $\mathbb R\times \mathbb C$ with $\varphi(0,0)=0$. 

By hypothesis of Theorem \ref{maintheorem3}, let $\{\eta_j\}\subset\Omega$ be a sequence converging spherically $\frac{1}{2m}$-tangentially to $\xi_0$, and let us write $\eta_j=(\alpha_j,\beta_j)=(\alpha_{j},a_j+ib_j), \; \forall j\in \mathbb N_{\geq 1}$. In addition, without loss of generality we may assume that $\{\eta_j=(\alpha_j,\beta_j)\}\subset U_0^-:=U_0\cap\{\rho<0\}$ and one associates with a sequence of points $\eta_j'=(\alpha_{j}, a_j +\epsilon_j+i b_j)\in\partial \Omega$, for some sequence $\{\epsilon_j\}\subset \mathbb R^+$. Thus we have
\begin{itemize}
\item[(a)] $|b_j|\lesssim \epsilon_j$;
\item[(b)] $\epsilon_j=o(|\alpha_{j}|^{2m})$;
\item[(c)] $\Delta H(\alpha_{j})\gtrsim |\alpha_{j}|^{2m-2}$.
\end{itemize}

It follows from \cite[Section $3$]{Be03} (see also~\cite[Proposition $1.1$]{Cat89}) that,  for each point $\eta_j'$, there exists a biholomorphism $\Phi_{\eta_j'}$ of $\mathbb C^2$, $(z,w)=\Phi^{-1}_{\eta_j'}(\tilde z, \tilde w)$ defined by
$$
\Phi_{\eta_j'}^{-1}(z, w) =\Big(\alpha_j + z, a_j+\epsilon_j+i b_j + d_0(\eta_j')w + \sum_{1 \leq k \leq 2m} d_k(\eta_j')z^k\Big),
$$
where $d_0,\ldots,d_{2m}$ are $\mathcal{C}^\infty$-smooth functions defined in a neighborhood of the origin in $\mathbb C^{2}$ with $d_0(0,0)=1,d_1(0,0)=\cdots=d_{2m}(0,0)=0$, such that
\begin{equation}\label{Eq50}
\rho \circ \Phi_{\eta_j'}^{-1}(z, w)  = \mathrm{Re}(w) + \sum_{\substack{j+k \leq 2m \\ j,k > 0}} a_{j,k}(\eta_j')z^j\bar z^k + O(|z|^{2m+1} + |z||w|). 
\end{equation}

We first define
\begin{equation*}\label{Eq51}
\begin{split} 
A_l(\eta_j')&=\max  \left\{|a_{j,k}(\eta_j')|, \ j+k=l\right\}\ (  2\leq l \leq 2m).
\end{split}
\end{equation*}
Then,  for each $\delta>0$, one defines $\tau(\eta_j',\epsilon_j)$ as follows: 
\begin{equation*}\label{Eq6} 
\tau_j=\tau(\eta_j',\epsilon_j)=\min \left\{\big( \epsilon_j/A_l(\eta_j') \big)^{1/l},\ 2\leq l \leq 2m\right \}.
\end{equation*}
Since the type of $\partial \Omega$ at $\xi_0$ equals $2m$, $A_{2m}(\xi_0)\ne 0$. Thus, if $U_0$ is sufficiently small, then $|A_{2m}(\eta_j')|\geq c>0$ for all $\eta_j'\in U_0$. This gives  the inequality
\begin{equation*}\label{Eq7} 
\delta^{1/2}\lesssim \tau(\eta_j',\delta)  \lesssim \delta^{1/m}\  (\eta_j'\in U).
\end{equation*}

To finish the scaling procedure, let us define an anisotropic dilation $\Delta_j$ by 
\[
\Delta_j (z,w)=\left( \frac{z}{\tau_{j}},\frac{w}{\epsilon_j}\right),\; j\in \mathbb N_{\geq 1}.
\]
As in the proof of Theorem \ref{maintheorem2}, one sees that $\Delta_j\circ \Phi_{\eta_j'}(\eta_j')=(0,0)$ and $\Delta_j\circ \Phi_{\eta_j'}(\eta_j)=(0,-1/d_0(\eta_j'))\to(0,-1)$ as $j\to\infty$, since $d_0(\eta_j'))\to 1$ as $j\to\infty$. Furthermore, for each $j\in \mathbb N_{\geq 1}$, if we set $\rho_j(z,w)=\epsilon_j^{-1}\rho\circ \Phi_{\eta_j'}^{-1}\circ(\Delta_j)^{-1}(z,w)$, then \eqref{Eq50} implies that
\begin{equation*}\label{def-j}
\rho_j(z,w)=\mathrm{Re}(w)+ P_{\eta_j'}(z)+O(\tau(\eta_j',\epsilon_j)),
\end{equation*}
where
\begin{equation*}
\begin{split}
&P_{\eta_j'}(z):=\sum_{\substack{k,l\leq 2m\\
 k,l>0}} a_{k,l}(\eta_j') \epsilon_j^{-1} \tau_j^{k+l}z^k \bar z^l.
\end{split}
\end{equation*}

Next, if we write $\displaystyle H(z)=\sum_{j=1}^{2m-1} a_j z^j\bar z^{2m-j}$ and $z=|z| e^{i\theta}$, then we obtain $H(z)=|z|^{2m} g(\theta)$ for some function $g(\theta)$. Hence, as in \cite{BF78} one has
 $$
 \Delta H(z)=|z|^{2m-2} \left((2m)^{2} g(\theta)+g_{\theta\theta}(\theta)\right)\geq 0.
 $$
Moreover, \cite[Lemma $4.1$]{NNN25} implies that
$$
\frac{\partial^2 H(\alpha_j)}{\partial z\partial \bar z}\epsilon_j^{-1}\tau_j^{2}=(2m)^2g(\theta_j)+g_{\theta\theta}(\theta_j),\; \forall j\geq 1,
$$
where $\alpha_{j}=|\alpha_{j}|e^{\theta_j},\;j\geq 1$. Thanks to the condition (c), without loss of generality we may assume that  the limit $\displaystyle a:=  \lim_{j\to \infty} \dfrac{1}{2}\frac{\partial^2 H}{\partial z\partial \bar z}(\alpha_{j})\epsilon_j^{-1} \tau_{j}^2 $ exists.

A simple calculation shows that
$$
 a_{l,k-l}(\eta_j')=\frac{1}{k!}\frac{\partial^{k} \rho}{\partial z^l\partial \bar z^{k-l}}(\eta_j')=\frac{1}{k!}\frac{\partial^{k} H}{\partial z^l\partial \bar z^{k-l}}(\alpha_j)+ \frac{b_j}{k!}\frac{\partial^{k} \varphi}{\partial z^l\partial \bar z^{k-l}}(b_j,\alpha_j)+\cdots,\; 
$$
for all $j\in \mathbb N_{\geq 1}$, $2\leq k\leq 2m$, and $0\leq l\leq k$, where the dots denote remainder terms. Since $H$ is a homogeneous subharmonic polynomial of degree $2m$, it follows that $\displaystyle \Big|\frac{\partial^{k} H}{\partial z^l\partial \bar z^{k-l}}(\alpha_j)\Big|\lesssim |\alpha_{j}|^{2m-k}$ for $2\leq k\leq 2m$. In addition, since $|b_j|\lesssim \epsilon_j=o( |\alpha_{j}|^{2m})$ one has  $|a_{l,k-l}(\eta_j')|\lesssim |\alpha_{j}|^{2m-k}$ for $2\leq k\leq 2m$. This implies that $A_k(\eta_j')\lesssim |\alpha_{j}|^{2m-k}$, and hence one gets
$$
\big(\epsilon_j/ A_k(\eta_j') \big)^{1/k}\gtrsim\big(\epsilon_j/ |\alpha_{j}|^{2m-k}\big)^{1/k}=|\alpha_{j}| \big(\epsilon_j/ |\alpha_{j}|^{2m}\big)^{1/k},\;  2\leq k\leq 2m.
$$
Moreover, since $\epsilon_j=o( |\alpha_{j}|^{2m})$ and $|\alpha_{j}| \big(\epsilon_j/ |\alpha_{j}|^{2m}\big)^{1/2}=o\left(|\alpha_{j}| \big(\epsilon_j/ |\alpha_{j}|^{2m}\big)^{1/k}\right)$ for all $k\geq3$, it follows that  
$$
\tau_j=\big(\epsilon_j/ A_2(\eta_j') \big)^{1/2}\approx|\alpha_{j}| \big(\epsilon_j/ |\alpha_{j}|^{2m}\big)^{1/2}.
$$

Now, we establish the convergence of the sequence $\{\Delta_j\circ \Phi_{\eta_j'}(U_0^-)\}_{j=1}^{\infty}$. Indeed, a computation shows that
\begin{align*}
| a_{l,k-l}(\eta_j')| \epsilon_j^{-1}\tau_j^{k}\approx\left |\frac{\partial^k H}{\partial z^l\partial \bar z^{k-l}}(\alpha_j)\right| \epsilon_j^{-1}\tau_j^{k}&\lesssim |\alpha_j|^{2m-k} \epsilon_j^{-1}\tau_j^{k}=|\alpha_j|^{2m} \epsilon_j^{-1}\Big(\frac{\tau_j}{|\alpha_j|}\Big)^{k}\\
&\lesssim \frac{|\alpha_j|^{2m}}{ \epsilon_j}\Big(\frac{\epsilon_j}{|\alpha_j|^{2m}}\Big)^{k/2}=\Big(\frac{\epsilon_j}{|\alpha_j|^{2m}}\Big)^{k/2-1}.
\end{align*}
This yields that $ a_{l,k-l}(\eta_j')| \epsilon_j^{-1}\tau_j^{k}\to 0$ as $j\to\infty$ for $3\leq k\leq 2m$ and 
$$
  \lim_{j\to \infty}a_{1,1}(\eta_j')\epsilon_j^{-1} \tau_{j}^2 =\lim_{j\to \infty} \dfrac{1}{2}\frac{\partial^2 H}{\partial z\partial \bar z}(\alpha_{j})\epsilon_j^{-1} \tau_{j}^2=a >0.
$$
Altogether, we conclude that, after taking a subsequence if necessary,  the sequence $\{ \rho_j\}$ converges on compacta to the following function
$$
\hat\rho(z,w):=\mathrm{Re}(w)+a|z|^2,
$$
where $\displaystyle a=\frac{1}{2} \lim_{j\to \infty} \frac{\partial^2 H}{\partial z\partial \bar z}(\alpha_{j})\epsilon_j^{-1} \tau_{j}^2 > 0$. Therefore, passing to a subsequence if necessary, we may assume that the sequences $\Omega_j:=\Delta_j\circ \Phi_{\eta_j'}(\Omega) $ and $\Delta_j\circ \Phi_{\eta_j'}(\Omega\cap U_0)$ converge normally to the Siegel half-space
$$
M_{a}:=\left \{( z,w)\in \mathbb C^2\colon \hat\rho(z,w)=\mathrm{Re}(w)+a|z|^2<0\right\}.
$$

The remainder of the proof is to estimate $\sigma_{\Omega\cap U_0}(\eta_j)$. To do this, let us first define the linear transformation $\Theta$, given by 
\[ 
\tilde w= w;  \tilde z=\sqrt{a}\, z,
\]
maps $M_{a}$ onto the Siegel half-space
$$
\mathcal{U}_2:=\{(z,w)\in \mathbb{C}^2 \colon\mathrm{Re}(w) +|z|^2<0\}.
$$
In addition,  the holomorphic map $\Psi $, defined by
\[
(z,w)\mapsto  \Big( \frac{2z}{1-w},  \frac{w+1}{1-w}\Big),
\]
is a biholomorphism from $\mathcal{U}_2$ onto $\mathbb B^2$.

Next, let us consider the sequence of biholomorphic map $f_j:=\Psi\circ \Theta\circ \Delta_j\circ \Phi_{\eta_j'} \colon \Omega \to  f_j(\Omega)=\Psi\circ \Theta(\Omega_j)$. Notice that $f_j(\Omega \cap U_0)$ and $f_j(\partial \Omega \cap U_0)$ converge normally to $\mathbb{B}^{2}$ and $\partial \mathbb{B}^{2}$, respectively. Moreover, since $\Theta(0,-1) =(0,-1)$ and $\Psi(0,-1)=(0,0)$, it follows that
\[
f_j(\eta_j) = \Psi\circ \Theta(0,-1/d_0(\eta_j')) = \Psi(0,-1/d_0(\eta_j')) = \left(0, \frac{1-1/d_0(\eta_j')}{1+1/d_0(\eta_j')}\right)  \to (0, 0) \quad \text{as } j \to \infty.
\]
Therefore, by a similar argument as in the proof of Theorem \ref{maintheorem2}, we conclude that for a sufficiently small $\epsilon > 0$, there exists $j_0 \in \mathbb{N}_{\geq 1}$ such that
\[
B\big((0,0), 1 - \epsilon\big) \subset F_j(U_0\cap \Omega) \subset B\big((0,0), 1 + \epsilon\big), \, \forall j \geq j_0,
\]
where $F_j(\cdot) := f_j(\cdot) - f_j(\eta_j)$ for all $j \geq j_0$. Since $ F_j(\eta_j) = 0,\; \forall j \geq j_0$, it follows that
\[
\sigma_{\Omega\cap U_0}(\eta_j) \geq \frac{1 - \epsilon}{1 + \epsilon}, \quad \forall j \geq j_0.
\]
Since $\epsilon>0$ is arbitrary, we conclude that $\displaystyle \lim_{j\to \infty}\sigma_{\Omega\cap U_0}(\eta_j)=1$, and  the proof of Theorem~\ref{maintheorem3} is now complete. \hfill $\Box$

Let us write $\eta_j=(\alpha_j,\beta_j)$ for $j\in\mathbb{N}_{\geq 1}$. Without condition (a) in Definition \ref{spherically-convergence}, for some domain $\Omega$ and some sequence $\{\eta_j\}\subset \Omega$ where $\mathrm{Im}(\beta_j)$ has a significant contribution to $\frac{\partial^2 \rho}{\partial z\partial \bar{z}}(\eta_j)$, the quantity $\frac{\partial^2 \rho}{\partial z\partial \bar{z}}(\eta_j)$ may differ significantly from $\frac{\partial^2 P}{\partial z\partial \bar{z}}(\eta_j)$. However, the limiting model may still be biholomorphically equivalent to the unit ball. The following example demonstrates this phenomenon.
\begin{example} \label{Ex5.1}
Let $\mathcal{G}$ be the domain in $\mathbb{C}^2$ defined by
$$
\mathcal{G}:=\left\{(z,w)\in \mathbb{C}^2\colon \rho(z,w):=\mathrm{Re}(w)+ |z|^4+|\mathrm{Im}(w)|^2 |z|^2 <0\right\}.
$$
Consider the sequences $\eta_j=\Big(\frac{1}{j^{1/4}},-\frac{2}{j}-\frac{1}{j^2}+\frac{i}{j^{1/4}}\Big)\in \mathcal{G}$ and $\eta_j'=\Big(\frac{1}{j^{1/4}},-\frac{2}{j}+\frac{i}{j^{1/4}}\Big)\in \partial \mathcal{G}$ for all $j\in \mathbb{N}_{\geq 1}$. We have $\rho(\eta_j)=-\frac{1}{j^2}<0$ for all $j\in \mathbb{N}_{\geq 1}$, so we set $\epsilon_j=\frac{1}{j^2}\approx \mathrm{dist}(\eta_j,\partial \mathcal{G})$ and $b_j=\frac{1}{j^{1/4}}$ for all $j\in \mathbb{N}_{\geq 1}$. 

We first observe that $|b_j|=\frac{1}{j^{1/4}}\not \lesssim \epsilon_j$. Therefore, condition $(a)$ in Definition \ref{spherically-convergence} is not satisfied, and thus $\{\eta_j\}$ does not converge spherically $\frac{1}{4}$-tangentially to $\xi_0=(0,0)$. Although condition $(b)$ still holds since $\epsilon_j=o\big(\frac{1}{j}\big)=o(|\alpha_j|^4)$ with $\alpha_j=\frac{1}{j^{1/4}}$, the scaling method in the proof of Theorem \ref{maintheorem3} cannot be employed.

We now introduce an alternative scaling method. Indeed,  the Taylor expansion of the function $\rho$ in a neighbourhood of $\eta_j'$ has the form
\begin{align*}
\rho(z,w)&=\mathrm{Re}(w)+ \left|z-\frac{1}{j^{1/4}}+\frac{1}{j^{1/4}}\right|^4+\left|v-\frac{1}{j^{1/4}}+\frac{1}{j^{1/4}}\right|^2 \left|z-\frac{1}{j^{1/4}}+\frac{1}{j^{1/4}}\right|^2 \\
              &=\mathrm{Re}\Big(w+\frac{2}{j}\Big)+\frac{6}{j^{3/4}} \mathrm{Re}\Big(z-\frac{1}{j^{1/4}}\Big)+\frac{5}{j^{1/2}} \left|z-\frac{1}{j^{1/4}}\right|^2+\frac{2}{j^{1/2}} \mathrm{Re}\Big(\Big(z-\frac{1}{j^{1/4}}\Big)^2\Big)\\
& \quad +O\Big(\frac{1}{j^{3/4}}\Big(v-\frac{1}{j^{1/4}}\Big)+\frac{1}{j^{1/4}} \left|z-\frac{1}{j^{1/4}}\right|^3\Big),
\end{align*}
where $v=\mathrm{Im}(w)$.

To apply the scaling method, we define the scaling parameter $\tau_j:=\frac{1}{j^{3/4}}$ for all $j\in \mathbb{N}_{\geq 1}$. We then define a sequence of polynomial automorphisms $\phi_{{\eta}_j}^{-1}$ of $\mathbb{C}^2$ given by
\begin{align*}
 z&=\frac{1}{j^{1/4}}+\tau_j \tilde{z};\\
w&=\epsilon_j \tilde{w}-\frac{2}{j}+\frac{i}{j^{1/4}}-\frac{6}{j^{3/4}} \tau_j \tilde{z}-\frac{2}{j^{1/2}}  \tau_j^2 \tilde{z}^2.
\end{align*}
Since $\tau_j=\frac{1}{j^{3/4}}=o\big(\frac{1}{j^{1/4}}\big)$, we obtain
\begin{equation*}
\epsilon_j^{-1}\rho\circ \phi_{{\eta}_j} ^{-1}(\tilde{z},\tilde{w})= \mathrm{Re}(\tilde{w}) + 5|\tilde{z}|^2 +O\Big(\frac{1}{j^{1/2}}\Big).
\end{equation*} 
This ensures that $\Omega_j:=\phi_{{\eta}_j}(\mathcal{G}\cap U_0)$, where $U_0$ is a sufficiently small neighborhood of $(0,0,0)$, converges normally to the model $\mathcal{F}:=\{(\tilde{z},\tilde{w})\in \mathbb{C}^2\colon \mathrm{Re}(\tilde{w}) + 5|\tilde{z}|^2<0 \}$, which is biholomorphically equivalent to the unit ball $\mathbb{B}^2$, and $\phi_{{\eta}_j}(\eta_j)=(0,-1)\in \mathcal{F}$ for all $j\geq 1$. Therefore, by following the proof of Theorem~\ref{maintheorem3} we conclude that $\sigma_{\mathcal{G}\cap U_0}(\eta_j)\to 1$ as $j\to \infty$. \hfill $\Box$

\end{example}

Next, the following example illustrates spherically $\frac{1}{2m}$-tangential convergence.
\begin{example}\label{Kohn-Nirenberg}
Let $\Omega_{KN}$ be the Kohn-Nirenberg domain in $\mathbb{C}^2$ that does not admit a holomorphic support function (see \cite{KN73}), defined by
$$
\Omega_{KN}:=\left\{(z,w)\in \mathbb{C}^2\colon \mathrm{Re}(w)+ |z|^8+\frac{15}{7}|z|^2\mathrm{Re}(z^6)<0\right\}.
$$
Let us consider a bounded domain $\Omega$ with $(0,0)\in \partial \Omega$ such that $\Omega\cap U_0=\Omega_{KN}\cap U_0$ for some neighbourhood $U_0$ of $(0,0)$ in $\mathbb{C}^2$. We denote by $\rho(z,w)=\mathrm{Re}(w)+ |z|^8+\frac{15}{7}|z|^2\mathrm{Re}(z^6)$ and $P(z)=|z|^8+\frac{15}{7}|z|^2\mathrm{Re}(z^6)$. It is easy to see that $\Delta P(z)=4(16|z|^6+15\mathrm{Re}(z^6))\geq 4|z|^6$, and hence $\partial \Omega$ is strongly $h$-extendible at $(0,0)$.

We first consider a sequence $\eta_j=\Big(\frac{1}{j^{1/8}},-\frac{22}{7j}-\frac{1}{j^2}\Big)\in \Omega$ for every $j\in \mathbb{N}_{\geq 1}$. Then the sequence $\left\{\left(\frac{1}{j^{1/8}},-\frac{22}{7j}-\frac{1}{j^2}\right)\right\}$ converges spherically $\frac{1}{8}$-tangentially to $(0,0)$. Moreover, we have $\rho(\eta_j)=-\frac{22}{7j}-\frac{1}{j^2}+\frac{22}{7j}=-\frac{1}{j^2}\approx -\mathrm{dist}(\eta_j,\partial \Omega_{KN})$. Setting $\epsilon_j=|\rho(\eta_j)|=\frac{1}{j^2}$, a computation shows that
\begin{align*}
&\rho(z,w)\\
&=\mathrm{Re}(w)+ \Big|\Big(z-\frac{1}{j^{1/8}}\Big)+\frac{1}{j^{1/8}}\Big|^8+\frac{15}{7}\Big|\Big(z-\frac{1}{j^{1/8}}\Big)+\frac{1}{j^{1/8}}\Big|^2\mathrm{Re}\Big(\Big(\Big(z-\frac{1}{j^{1/8}}\Big)+\frac{1}{j^{1/8}}\Big)^6\Big)\\
&=\mathrm{Re}(w)+\frac{1}{j}+\frac{8}{j^{7/8}} \mathrm{Re}\Big(z-\frac{1}{j^{1/8}}\Big)+\frac{16}{j^{3/4}} \Big|z-\frac{1}{j^{1/8}}\Big|^2+\frac{12}{j^{3/4}} \mathrm{Re}\Big(\Big(z-\frac{1}{j^{1/8}}\Big)^2\Big)\\
&\quad+\frac{15}{7}\left[\frac{1}{j}+\frac{8}{j^{7/8}}\mathrm{Re}\Big(z-\frac{1}{j^{1/8}}\Big)+\frac{21}{j^{3/4}} \mathrm{Re}\Big(\Big(z-\frac{1}{j^{1/8}}\Big)^2\Big)+\frac{7}{j^{3/4}}\Big|z-\frac{1}{j^{1/8}}\Big|^2\right]+\cdots\\
&=\mathrm{Re}(w)+\frac{22}{7j}+\frac{176}{7j^{7/8}} \mathrm{Re}\Big(z-\frac{1}{j^{1/8}}\Big)+\frac{57}{j^{3/4}} \mathrm{Re}\Big(\Big(z-\frac{1}{j^{1/8}}\Big)^2\Big)+\frac{31}{j^{3/4}}\Big|z-\frac{1}{j^{1/8}}\Big|^2\\
&\quad +O\Big(\frac{1}{j^{5/8}}\Big|z-\frac{1}{j^{1/8}}\Big|^3\Big).
\end{align*}

To define an anisotropic dilation, let us denote $\tau_j:=\tau(\eta_j)=\frac{1}{j^{5/8}}$ for all $j\in \mathbb{N}_{\geq 1}$. Now we introduce a sequence of polynomial automorphisms $\phi_{{\eta}_j}^{-1}$ of $\mathbb{C}^2$, given by
\begin{align*}
 z&=\frac{1}{j^{1/8}}+\tau_j \tilde{z};\\
w&=\epsilon_j \tilde{w}-\frac{22}{7j}-\frac{176}{7j^{7/8}} \tau_j \tilde{z}-\frac{57}{j^{3/4}}  \tau_j^2 \tilde{z}^2.
\end{align*}
Therefore, since $\tau_j=\frac{1}{j^{5/8}}=o\big(\frac{1}{j^{1/8}}\big)$, we have
\begin{equation*}
\epsilon_j^{-1}\rho\circ \phi_{{\eta}_j} ^{-1}(\tilde{z},\tilde{w})= \mathrm{Re}(\tilde{w}) + 31|\tilde{z}|^2 +O\Big(\frac{1}{j^{1/2}}\Big).
\end{equation*} 
This implies that $\Omega_j:=\phi_{{\eta}_j}(\Omega\cap U)$, where $U$ is a sufficiently small neighborhood of $(0,0)$, converges normally to the model $\mathcal{H}:=\{(\tilde{z},\tilde{w})\in \mathbb{C}^2\colon \mathrm{Re}(\tilde{w}) + 31|\tilde{z}|^2<0 \}$, which is biholomorphically equivalent to $\mathbb{B}^2$, and $\phi_{{\eta}_j}(\eta_j)=(0,-1)\in \mathcal{H}$ for all $j\geq 1$. By arguments as in the proof of Theorem \ref{maintheorem3}, we conclude that $\sigma_{\Omega\cap U}(\eta_j)\to 1$ as $j\to \infty$.
\hfill $\Box$ 
\end{example}

To complete this section, we introduce the following example, which demonstrates the case when $\Delta P(\alpha_j)=0,\;\forall j\in \mathbb N_{\geq 1}$.

\begin{example}\label{rmk5.2} 
As in \cite{NNN25}, instead of $\Omega_{KN}$ we consider a bounded domain $\Omega$ such that $\Omega\cap U_0=\widetilde{\Omega}_{KN}\cap U_0$, where $U_0$ is a neighbourhood of the origin in $\mathbb{C}^2$ and 
 $$
\widetilde{\Omega}_{KN}:=\left\{(z,w)\in \mathbb{C}^2\colon \mathrm{Re}(w)+ |z|^8-\frac{16}{7}|z|^2\mathrm{Re}(z^6)<0\right\}.
$$
Let $P(z)=|z|^8-\frac{16}{7}|z|^2\mathrm{Re}(z^6)$ and $\alpha_j=1/j^{1/8}$ for all $j\geq 1$. Then $\Delta P(\alpha_j)=0$ for all $j\geq 1$. We now consider the sequence $\left\{\left(\frac{1}{j^{1/8}},\frac{9}{7j}-\frac{1}{j^2}\right)\right\}\subset \Omega$ that converges $\frac{1}{8}$-tangentially but not spherically $\frac{1}{8}$-tangentially to $(0,0)$. Then let us define $\epsilon_j=\frac{1}{j^2},\tau_j=\frac{1}{j^{3/8}}$ for all $j\in \mathbb N_{\geq 1}$. Therefore, by arguments as in Example \ref{Kohn-Nirenberg}, we conclude that our model is given by 
 $$
 \mathcal{A}:=\left\{(\tilde{z},\tilde{w})\in \mathbb{C}^2\colon \mathrm{Re}(\tilde{w})+36 |\tilde{z}|^4-48|\tilde{z}|^2\mathrm{Re}(\tilde{z}^2)<0\right\}.
 $$
(For more details, see Example $5.1$ in \cite{NNN25}.) However, it is not clear that $D$ is biholomorphically equivalent to the domain 
$$
 \mathcal{B}:=\left\{(\tilde{z},\tilde{w})\in \mathbb{C}^2\colon |\tilde{w}|^2+36 |\tilde{z}|^4-48|\tilde{z}|^2\mathrm{Re}(\tilde{z}^2)<1\right\}.
$$
Therefore, the scaling method as in the proof of Theorem \ref{maintheorem3} may not be applicable. Furthermore, it remains to be seen whether $ \mathcal{A}$ is biholomorphically equivalent to a bounded domain (note that even the domain $ \mathcal{B}$ is unbounded), and so $\sigma_{ \mathcal{A}}$ may not be defined.

\end{example}
\section*{Acknowledgments}
The author was supported by the Vietnam National Foundation for Science and Technology Development (NAFOSTED) under grant number 101.02-2021.42. 
We would like to thank Professor Shichao Yang for providing a counterexample that helped us complete the proof of Theorem \ref{maintheorem1}.

\bibliographystyle{plain}

\end{document}